\theoremstyle{plain}
\newtheorem{theorem}{Theorem}[section]
\newtheorem{lem}[theorem]{Lemma}
\newtheorem{prop}[theorem]{Proposition}
\theoremstyle{definition}
\newtheorem{exa}[theorem]{Example}
\newtheorem{obs}[theorem]{Remark}
\numberwithin{equation}{section}
\newcommand{\bbN}{\mathbb{N}}
\newcommand{\mc}{C(\lambda, p)}
\newcommand{\MCO}{C_d^o(\lambda,p)}
\newcommand{\MCOd}{C_2^o(\lambda,p)}
\newcommand{\MCOt}{C_3^o(\lambda,p)}
\newcommand{\MCI}{C_d^i(\lambda,p)}
\newcommand{\MCId}{C_2^i(\lambda,p)}
\newcommand{\MCIt}{C_3^i(\lambda,p)}
\newcommand{\MCU}{C_d^u(\lambda,p)}
\newcommand{\MCUd}{C_2^u(\lambda,p)}
\newcommand{\MCUt}{C_3^u(\lambda,p)}
\newcommand{\LO}{\lambda_{c}^o(d,p)}
\newcommand{\LOd}{\lambda_{c}^o(2,p)}
\newcommand{\LOt}{\lambda_{c}^o(3,p)}
\newcommand{\LI}{\lambda_{c}^i(d,p)}
\newcommand{\LId}{\lambda_{c}^i(2,p)}
\newcommand{\LIt}{\lambda_{c}^i(3,p)}
\newcommand{\LU}{\lambda_{c}^u(d,p)}
\newcommand{\LUd}{\lambda_{c}^u(2,p)}
\newcommand{\LUt}{\lambda_{c}^u(3,p)}
\begin{document}

\baselineskip=18pt

\title[Evaluating dispersion strategies in growth models]{Evaluating dispersion strategies in growth models subject to geometric catastrophes}

\author[Valdivino Vargas Junior]{Valdivino Vargas Junior}
\address[Valdivino Vargas Junior]{Institute of Mathematics and Statistics, Federal University of Goias, Campus Samambaia, 
CEP 74001-970, Goi\^ania, GO, Brazil}
\email{vvjunior@ufg.br}
\thanks{F\'abio Machado was supported by CNPq (303699/2018-3) and Fapesp (17/10555-0) and Alejandro Roldan by Universidad de Antioquia.}

\author[F\'abio P. Machado]{F\'abio~Prates~Machado}
\address[F\'abio P. Machado]{Statistics Department, Institute of Mathematics and Statistics, University of S\~ao Paulo, CEP 05508-090, S\~ao Paulo, SP, Brazil.}
\email{fmachado@ime.usp.br}

\author[Alejandro Rold\'an]{Alejandro~Rold\'an-Correa}
\address[Alejandro Rold\'an]{Instituto de Matem\'aticas, Universidad de Antioquia, Calle 67, no 53-108, Medellin, Colombia}
\email{alejandro.roldan@udea.edu.co}


\keywords{Branching processes, catastrophes, population dynamics}
\subjclass[2010]{60J80, 60J85, 92D25}
\date{\today}

\begin{abstract}
We consider stochastic growth models to represent population dynamics subject to geometric catastrophes. We analyze different dispersion schemes after catastrophes, to study how these schemes impact the population viability and comparing them with the scheme where there is no dispersion. In the schemes with dispersion, we consider that each colony, after the catastrophe event, has $d$ new positions to place its survivors.
We find out that when $d = 2$ no type of dispersion considered improves the chance of survival, at best it matches the scheme where there is no dispersion. When $d = 3$, based on the survival probability, we conclude that dispersion may be an advantage or not, depending on its type, the rate of colony growth and the probability that an individual will survive when exposed to a catastrophe.
\end{abstract}

\maketitle

\section{Introduction}
\label{S: Introduction}

Catastrophes and spatial restrictions are among biological and environmental forces that drive the size dynamics of a population. These forces can reduce the po\-pu\-lation size or even eliminate it. Dispersion of the survivors is a possible strategy that could help to increase the population viability.

Models for population growth (a single colony) subject to catastrophes are considered in Brockwell \textit{et al.}~\cite{BGR1982} and later in Artalejo \textit{et al.}~\cite{AEL2007}. In these models, the size of a colony increases according to a birth and death process subject to catastrophes. When a catastrophe strikes, the colony size is reduced according to some probability law and the survivors remain together in the same colony without dispersion. These authors study the probability distribution of the first extinction time, the number of individuals removed, the survival time of a tagged individual, and the maximum population size reached between two consecutive extinctions. For a comprehensive literature overview and motivation see for example Kapodistria \textit{et al.}~\cite{KPR2016}.

Schinazi \cite{S2014} and Machado \textit{et al}~\cite{MRS2015} study stochastic models for po\-pu\-lation growth where individuals gather in independent colonies subject to catastrophes. When a catastrophe strikes a colony the survivors disperse, trying to build new colonies that may succeed settling down depending on the environment they encounter.
Schinazi \cite{S2014} and Machado \textit{et al}~\cite{MRS2015} conclude that for these models dispersion is the best strategy. Latter Junior \textit{et al}~\cite{JMR2016} focused on models that combine two types of catastrophes (binomial and geometric) and reached the conclusion that dispersion may not be the best strategy. They observe that the best strategy depends on the type of catastrophe, the spatial restrictions that the colony must deal with and the individual survival probability when it is exposed to a catastrophe. Machado \textit{et al}~\cite{MRV2018} considered a general set up for growth rates and types of catastrophes but with severe spatial restrictions in the sense that every colony, after the catastrophe event, has only up to $d$ spots to move to.

The common point between these papers \cite{JMR2016,MRV2018,MRS2015} is the way the survivors disperse when their colony is stricken by a catastrophe. They consider that each survivor choose independently and with equal probability among the options they have to disperse. In this paper we study a variety of dispersion possibilities and show that the survival probability of the whole population may be influenced by the scheme the individuals use when the dispersion occurs.

In Section 2 we define and characterize four models for the growth of populations subject to catastrophes considering different types of dispersion. In Section 3 we compare the four models introduced in Section 2. Finally, in Section 4 we prove the results presented in Sections 2 and 3.

\section{Growth models}

Artalejo \textit{et al.}~\cite{AEL2007} present a model for a population which sticks together in one colony, without dispersion.  That colony gives birth to new individuals at rate $\lambda>0$,  while catastrophes happen at rate $\mu$. If at a catastrophe time the size of the population is $i$, it is reduced to $j$ with probability 
\[ \mu_{ij} = \left\{\begin{array}{ll} q^i, & j=0\\ pq^{i-j}, & 1\leq j \leq i,\end{array}\right.\]
where $0<p<1$ and $q=1-p$. The form of $\mu_{ij}$ represents what is called \textit{Geometric catastrophe}. Disasters reach the individuals sequentially and the effects of a disaster stop as soon as the first individual survives, if there is any survivor. The probability of next individual to survive, given that everyone fails up to that point, is $p$. 

The population size (number of individuals in the colony) at time $t$ is a continuous time Markov process $\left\{X(t):t\geq 0\right\}$ that we denote by $\mc$. We assume $\mu=1$ and $X(0)=1$. 

Artalejo \textit{et al.}~\cite{AEL2007} use the word \textit{extinction} to describe the event that $X(t) = 0$, for some $t>0$, for a process where state 0 is not an absorbing state. In fact the extinction time here is the first hitting time to the state 0. Throughout this paper we say that a process survives if the extinction probability is strictly smaller than one.

\begin{theorem}[Artalejo \textit{et al.}~\cite{AEL2007}]
	\label{th:semdisp}
	Let $X(t)$ a process $\mc$, with $\lambda>0$ and   $0<p<1$. Then, extinction event occurs with probability
	$$\psi_A=\min\left\{\frac{1-p}{\lambda p}, 1\right\}.$$
\end{theorem}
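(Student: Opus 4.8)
The plan is to compute the whole vector of extinction (= first hitting of $0$) probabilities and read off the value at one individual. Write $a_i$ for the probability that the process $\mc$, started from $i$ individuals, ever reaches state $0$, so that $a_0=1$ and, since $X(0)=1$, the quantity we want is $\psi_A=a_1$. First I would pass to the embedded jump chain: from a state $i\ge 1$ the next event is a birth (to $i+1$) with probability $\lambda/(\lambda+1)$ or a catastrophe with probability $1/(\lambda+1)$, after which the number of survivors follows the law $\mu_{ij}$. Conditioning on the first event and on the number of survivors, and moving to the left the contribution $p\,a_i$ coming from the catastrophe that kills nobody, yields for every $i\ge 1$ the recursion
\[
(\lambda+q)\,a_i \;=\; \lambda\,a_{i+1} + p\sum_{j=1}^{i-1} q^{\,i-j}\,a_j + q^{\,i}.
\]
Standard theory identifies $(a_i)$ as the \emph{minimal} nonnegative solution of this system subject to $a_0=1$.

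Next I would solve the recursion with generating functions. Set $G(s)=\sum_{i\ge 1}a_i s^i$, which converges for $|s|<1$ because $0\le a_i\le 1$. Multiplying the recursion by $s^i$, summing over $i\ge 1$, and recognizing the convolution against the geometric weights $q^m$ as multiplication by $qs/(1-qs)$, one can solve for $G$ and, after clearing denominators, obtain $G(s)=N(s)/D(s)$ with a \emph{quadratic} denominator. A direct computation gives $D(s)=-q(\lambda+1)s^2+\bigl(\lambda+q(\lambda+1)\bigr)s-\lambda$ and $N(s)=-\lambda a_1 s+q(\lambda a_1+1)s^2$, the only unknown being $a_1$. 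Since $s=1$ is visibly a root of $D$, one factors $D(s)=-q(\lambda+1)(s-1)(s-s_*)$ with $s_*=\lambda/\bigl(q(\lambda+1)\bigr)$, and the decisive observation is the dichotomy $s_*<1\iff \lambda p<q$.

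The value of $a_1$ is then fixed by insisting that $G$ be a genuine power series with coefficients in $[0,1]$ (the kernel/selection principle). A partial-fraction expansion gives $a_i=-F-H\,s_*^{-(i+1)}$ for $i\ge 1$, with $F,H$ depending linearly on $a_1$. If $\lambda p<q$ then $s_*<1$, so $s_*^{-(i+1)}\to\infty$ and boundedness of $(a_i)$ forces $H=0$, i.e. $N(s_*)=0$; solving this equation yields $a_1=1$. If $\lambda p>q$ then $s_*>1$, the term $H\,s_*^{-(i+1)}$ is harmless, and $a_i\to -F=(q-\lambda p\,a_1)/(q-\lambda p)$; as this limit is a probability it must be nonnegative, and since the denominator is negative in this regime this forces $a_1\ge q/(\lambda p)$, while minimality of the solution selects the extreme case $a_1=q/(\lambda p)$ (equivalently $N(1)=0$, so the limit is $0$). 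Combining the two regimes and writing $q=1-p$ gives exactly $\psi_A=a_1=\min\{(1-p)/(\lambda p),\,1\}$.

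The routine parts are the recursion and the generating-function algebra; the hard part will be the selection principle, namely justifying rigorously that among the one-parameter family of solutions parametrized by $a_1$ the extinction probabilities are the distinguished one. Concretely, I expect the obstacle to be arguing that boundedness and nonnegativity of the limit $\lim_i a_i$ force the cancellation of precisely the root of $D$ lying in the closed unit disk, together with handling the critical case $\lambda p=q$, where $s_*=1$ is a double root. There I would fall back on the monotone representation $a_i=\lim_n P_i(\text{reach }0\text{ before }n)$, legitimate because the process is skip-free upward, combined with an Abel/continuity argument at $s=1$, to confirm $a_1=1$ on the boundary.
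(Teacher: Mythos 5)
The paper does not prove this theorem: it is imported verbatim from Artalejo \emph{et al.}~\cite{AEL2007} and used as a black box (the only internal echo is Remark~\ref{surprising}, where the coincidence $\psi_A=\psi_2^o$ with Theorem~\ref{MCOd} is observed but explicitly \emph{not} explained). So there is no in-paper argument to compare against; what you have written is a self-contained derivation, and it checks out. Your jump-chain recursion is correct (for $i=1$ it reduces to $(\lambda+q)a_1=\lambda a_2+q$, as a direct conditioning confirms), the generating-function algebra gives exactly your $N$ and $D$, and $D(1)=0$ with second root $s_*=\lambda/(q(\lambda+1))$, so the dichotomy $s_*<1\iff\lambda p<q$ is right. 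In the subcritical regime you do not even need partial fractions: since $|a_i|\le 1$, the identity $G(s)D(s)=N(s)$ holds on $|s|<1$, so $D(s_*)=0$ forces $N(s_*)=0$ and hence $a_1=1$ directly. Two points deserve a little more care than your sketch gives them. First, in the regime $\lambda p>q$, minimality only yields $a_1\le q/(\lambda p)$ after you check that the solution with $a_1=q/(\lambda p)$ is itself nonnegative; it is, and pleasantly so, since $A=0$ there and the solution collapses to the pure geometric $a_i=s_*^{-i}/(p(\lambda+1))\ge 0$ --- worth stating explicitly, as it closes the two-sided inequality $a_1\ge q/(\lambda p)$ (from nonnegativity of the limit $-A$) and $a_1\le q/(\lambda p)$ (from minimality). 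Second, the critical case $\lambda p=q$ can be handled by the same boundedness argument rather than an Abel/continuity fallback: the double root of $D$ at $s=1$ produces a term with coefficients growing like $i$ in the expansion of $N/D$ unless $N(1)=0$, and $N(1)=0$ is again $a_1=q/(\lambda p)=1$. With those two touches the proof is complete and, if anything, more informative than the paper, which gives the reader nothing here.
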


The geometric catastrophe would correspond to cases where
the decline in the population is halted as soon as
any individual survives the catastrophic event. This
may be appropriate for some forms of catastrophic
epidemics or when the  catastrophe has a sequential propagation effect like in the predator-prey models - the predator kills prey until it becomes satisfied. More examples can be found in Artalejo \textit{et al.}~\cite{AEL2007}, Cairns and Pollett~\cite{CairnsPollett}, Economou and Gomez-Corral~\cite{EG2007}, Thierry Huillet~\cite{TH2020} and Kumar \textit{et al.}~\cite{KBG2020}.

Based on the previous model we next define three models with dispersion on $\mathbb{T}_d^+$.

\subsection{Growth model with dispersion on $\mathbb{T}_d^+$.} 

Let $\mathbb{T}_d^+$ be an infinite rooted tree whose vertices
have degree $d+1$, except the root that has degree $d$. Let us define a process with 
dispersion on $\mathbb{T}_d^+$, starting from a single colony placed at the root of $\mathbb{T}_d^+$, with just one individual. The number of individuals in a colony grows following a Poisson process of rate $\lambda>0$. To each colony 
we associate an exponential time of mean 1 that indicates when the geometric catastrophe strikes a colony. The individuals that survived the catastrophe are dispersed between the $d$ neighboring vertices furthest from the root to create new colonies. Among the survivors that go to the same vertex to create a new colony at it, only one succeeds, the others die. So in this case  when a catastrophe occurs in a colony, that colony is replaced by 0,1, ... or $ d $ colonies. We consider three types of dispersion:
\begin{itemize}
	\item \textbf{Optimal dispersion:} 
	Individuals are distributed, from left to right, in order to create the largest possible number of new colonies. If $r$ individuals survive to a catastrophe, then the number of colonies that are created equals $\min\{r,d\}$. Let us denote the process with optimal dispersion by $\MCO$.
	\item 
	\textbf{Independent dispersion:} Each one of the individuals that
	survived the catastrophe picks
	randomly a neighbor vertex and tries to create a new colony at it.
	When the amount of survivors is $r$, the probability of having $y \le \min\{d,r\}$ vertices colonized
	is
	\[ \frac{T(r,y)}{d^r} {{d}\choose{y}}, \]
	where $T(r, y)$  denote the number of surjective functions $f:A \to B$, with $|A| = r$ and
	$|B| = y$.  Let us denote the process with independent dispersion by $\MCI$.
	
	\item \textbf{Uniform dispersion:} For every $r$, the amount of survivors, each 
	set of numbers $r_1, r_2, \dots, r_{d} \in \bbN$ (\textit{occupancy set of numbers}), 
	solution for
	\[ r_1 + r_2 + \cdots + r_{d} = r\]
	has probability ${{d+r-1}\choose{r}}^{-1}$. So, the probability of having $y \le \min\{d,r\}$ vertices colonized
	when the amount of survivors is $r$ is
	\[ \frac{{{r-1}\choose{y-1}}}{{{d+r-1}\choose{r}}} {{d}\choose{y}}. \]
	Let us denote the process with uniform dispersion by $\MCU$.
	
\end{itemize}

The $\MCO$, $\MCI$ and $\MCU$ are continuous-time Markov processes with state space  $\mathbb{N}_0^{\mathbb{T}^d}$. For each of these processes we say that it {\it survives}  if with positive probability there are colonies for any time in that process. Otherwise, we say that the process {\it dies out }.

We denote by $\psi_d^o$ ($\psi_d^i$ and $\psi_d^u$) the extinction probability for $\MCO$ ($\MCI$ and $\MCU$, respectively) process. By coupling arguments one can see that the extinction probability, $\psi_d^o$ ($\psi_d^i$ and $\psi_d^u$), is a non-increasing function of $d$, $\lambda$ and $p$.

\begin{obs}
	As the optimal dispersion maximizes the number of new colonies whenever there are individuals that survived from the latest catastrophe, that type of dispersion is the one which maximizes the survival probability. Moreover, for $d=2$ and $d=3$, the survival probability for the model with independent dispersion is larger or equal than the survival probability for the model with uniform dispersion. The reason for that is because the cumulative distribution function of the number of new colonies created right after a catastrophe for the model with independent dispersion is smaller than the analogous for the model with uniform dispersion. In conclusion for $d=2$ and $d=3$,
	
	\begin{equation}\label{Ext-disp}
	\psi_d^o\leq \psi_d^i \leq \psi_d^u.
	\end{equation}
\end{obs}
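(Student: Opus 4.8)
The plan is to reduce each spatial process to a Galton--Watson process counting colonies and then to compare the three models through the stochastic ordering of their offspring laws. Since $\mathbb{T}_d^+$ is a tree, the colonies spawned at a catastrophe occupy pairwise disjoint subtrees, so distinct colonies never compete for the same vertex and they evolve independently and identically. Consequently the genealogy of colonies is a Galton--Watson tree whose offspring variable $Y$ is the number of new colonies created when a colony is struck by a catastrophe, and the whole process survives if and only if this Galton--Watson process does. Writing $f(s)=\mathbb{E}[s^Y]$ for the offspring generating function, the extinction probability is the smallest fixed point of $f$ in $[0,1]$; hence if two models have offspring laws with $f_1(s)\le f_2(s)$ on $[0,1]$, their extinction probabilities satisfy $q_1\le q_2$. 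Because $s\mapsto s^y$ is non-increasing on $[0,1]$, it suffices to show that the offspring laws are stochastically ordered, the stochastically larger offspring yielding the smaller extinction probability.

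The three models share the same growth rate $\lambda$, the same catastrophe rate, and the same geometric catastrophe mechanism, so the number $R$ of survivors at a catastrophe has one common distribution across all three. They differ only in the conditional law of $Y$ given $R=r$. I would therefore establish the ordering conditionally on $r$ and then mix: if $Y^o\mid r$, $Y^i\mid r$, $Y^u\mid r$ are ordered in the usual stochastic order for every $r$, the same ordering holds for the mixed (marginal) offspring laws, and the generating-function comparison above yields \eqref{Ext-disp}.

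For the optimal scheme this is immediate and holds for every $d$: by definition $Y^o=\min\{r,d\}$, which is the largest value $Y$ can take given $r$ survivors, so $Y^o\mid r$ dominates $Y^i\mid r$ and $Y^u\mid r$ pointwise. This already gives $\psi_d^o\le\psi_d^i$ and $\psi_d^o\le\psi_d^u$ for all $d$, so only the second inequality in \eqref{Ext-disp} carries a restriction on $d$.

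The real work is the comparison $Y^i\mid r \succeq Y^u\mid r$, i.e. the inequality $\mathbb{P}(Y^i\le y\mid r)\le\mathbb{P}(Y^u\le y\mid r)$ for all $y$. Using the explicit formulas $\mathbb{P}(Y^i=y\mid r)=d^{-r}T(r,y)\binom{d}{y}$ and $\mathbb{P}(Y^u=y\mid r)=\binom{d+r-1}{r}^{-1}\binom{r-1}{y-1}\binom{d}{y}$, for $d=2$ the ordering reduces to the single inequality $\mathbb{P}(Y^i=1\mid r)\le\mathbb{P}(Y^u=1\mid r)$, that is $2^{1-r}\le 2/(r+1)$, equivalent to $r+1\le 2^{r}$, which holds for all $r\ge 1$. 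For $d=3$ one checks the two CDF inequalities at $y=1$ and $y=2$ in the same explicit manner. I expect this last step to be the main obstacle: the surjection counts $T(r,y)$ and the occupancy probabilities have quite different algebraic forms, so proving the CDF inequality amounts to a family of binomial and exponential estimates in $r$ that must be verified term by term, and it is precisely the difficulty of controlling these for larger $d$ that confines the statement to $d\in\{2,3\}$.
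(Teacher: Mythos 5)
Your argument is correct and follows essentially the same route as the paper's (informal) justification of this remark: pointwise domination by the optimal scheme, plus stochastic domination of the independent over the uniform offspring law conditionally on the number of survivors, transferred to extinction probabilities through the offspring generating functions of the induced Galton--Watson process of colonies. The $d=3$ check you defer does go through: at $y=1$ it reduces to $(r+1)(r+2)\le 2\cdot 3^r$ and at $y=2$ to $3(2^r-1)/3^r\le 6r/\bigl((r+1)(r+2)\bigr)$, both valid for all $r\ge 1$.
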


The next results present necessary and sufficient conditions for population survival of the process $\MCOd$. 

\begin{theorem}\label{MCOd}
The process $\MCOd$ survives ($\psi_2^o < 1 $) if and only if
\[
\lambda > \frac{1-p}{p}.
\]

Moreover
\[
\psi_2^o = \min \left \{1, \frac{1-p}{\lambda p}\right \}.
\]
\end{theorem}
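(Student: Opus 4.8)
The plan is to reduce the continuous-time process $\MCOd$ to a Galton--Watson branching process indexed by colonies. Because dispersion always proceeds away from the root of $\mathbb{T}_2^+$, distinct colonies occupy distinct vertices and never interact: each colony is born with a single individual, grows and is struck by a catastrophe independently of every other colony, and produces a random number of daughter colonies at its two child vertices. Under optimal dispersion each colonized vertex receives exactly one founding individual (the surplus survivors being wasted), so all colonies are identically distributed and the genealogy of colonies is an ordinary Galton--Watson process. I would first make precise the equivalence that the continuous-time process survives, in the sense of having colonies at all times, if and only if this embedded branching process does not go extinct.

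Next I would compute the offspring law. Racing the birth clock (rate $\lambda$) against the catastrophe clock (rate $1$), the number of births a colony accumulates before its catastrophe is geometric, so the colony size $M$ at the catastrophe time is geometric on $\{1,2,\dots\}$ with $P(M=m)=\frac{1}{\lambda+1}\big(\frac{\lambda}{\lambda+1}\big)^{m-1}$. Conditioning on $M=m$ and using the geometric-catastrophe law $\mu_{mj}$, I would sum over $m$ to obtain the distribution of the number of survivors $R$, and then set $Z=\min\{R,2\}$ for the number of daughter colonies. A direct summation of the two resulting geometric series gives
\[
P(Z=0)=\frac{1-p}{1+\lambda p},\quad P(Z=1)=\frac{p}{1+\lambda p},\quad P(Z=2)=\frac{\lambda p}{1+\lambda p},
\]
so that the offspring generating function is the quadratic $f(s)=\big((1-p)+ps+\lambda p s^2\big)/(1+\lambda p)$.

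Finally I would invoke the standard theory of Galton--Watson processes: the extinction probability $\psi_2^o$ is the smallest fixed point of $f$ in $[0,1]$, and the process survives precisely when the mean offspring number $f'(1)$ exceeds $1$. Since $f(1)=1$, I would factor $f(s)-s$ as a multiple of $(s-1)$ to read off the second root $s=\frac{1-p}{\lambda p}$; comparing this with $1$ yields $\psi_2^o=\min\{1,\frac{1-p}{\lambda p}\}$. The survival criterion follows either from $f'(1)=\frac{p(1+2\lambda)}{1+\lambda p}>1$ or, equivalently, from the second root being strictly below $1$, both of which reduce to $\lambda>\frac{1-p}{p}$. I expect the main work to lie in the second step: assembling the offspring distribution from the geometric colony-size law and the geometric catastrophe, and justifying cleanly that the colony genealogy really is an independent, identically distributed Galton--Watson process. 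The feature worth emphasizing is that this computation reproduces exactly the no-dispersion extinction probability $\psi_A$ of Theorem~\ref{th:semdisp}, which is precisely why dispersion yields no advantage when $d=2$.
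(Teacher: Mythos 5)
Your proposal is correct and follows essentially the same route as the paper: both reduce $\MCOd$ to a Galton--Watson process of colonies with offspring law $P(Z=0)=\frac{1-p}{1+\lambda p}$, $P(Z=1)=\frac{p}{1+\lambda p}$, $P(Z=2)=\frac{\lambda p}{1+\lambda p}$ and then extract the non-unit root of the quadratic fixed-point equation (the paper does this via its Lemma~\ref{lemaux}$(iii)$, citing the survivor distribution $\mathbb{P}(N=n)=\alpha c^{n}$ from \cite{MRV2018} rather than rederiving it as you do). The only difference is that you derive the survivor law from the birth/catastrophe race yourself, which makes the argument slightly more self-contained but is not a different method.
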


\begin{obs}\label{surprising} From Theorems~\ref{th:semdisp} and~\ref{MCOd} one sees that extinction probabilities for the models $\mc$ and $\MCOd$, are the same ($\psi_A=\psi_2^o$). This is a big surprise and don't see any intuitive reason for this. As a consequence, when $d=2$, for fixed $\lambda$ and $p$, it is not possible to increase the survival probability by any type of dispersion.
\end{obs}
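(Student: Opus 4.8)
The plan is to treat this remark as a short deduction from Theorems~\ref{th:semdisp} and~\ref{MCOd} together with the preceding Remark, so the first step is to establish the equality $\psi_A=\psi_2^o$ by placing the two closed forms side by side. Theorem~\ref{th:semdisp} asserts that the single-colony process $\mc$, with state space $\bbN_0$, has extinction probability $\psi_A=\min\{(1-p)/(\lambda p),\,1\}$, while Theorem~\ref{MCOd} asserts that the optimal-dispersion process $\MCOd$, with state space $\bbN_0^{\mathbb{T}^2}$, has extinction probability $\psi_2^o=\min\{1,\,(1-p)/(\lambda p)\}$. These are the same function of $(\lambda,p)$ throughout the range $\lambda>0$, $0<p<1$, so $\psi_A=\psi_2^o$ follows with no further computation. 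The point worth stressing is that the two processes live on genuinely different state spaces and evolve by different mechanisms, yet share both the critical threshold $\lambda=(1-p)/p$ and the whole extinction curve above it.

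To deduce the stated consequence I would invoke the preceding Remark, which records---via the coupling that optimal dispersion always creates at least as many new colonies as any competing scheme given the same number of survivors---that optimal dispersion minimizes the extinction probability among all $d=2$ dispersion schemes. Hence any such scheme has extinction probability $\psi\ge\psi_2^o$, and combining this with the equality above gives
\[
\psi \;\ge\; \psi_2^o \;=\; \psi_A
\]
for every dispersion scheme. Its survival probability $1-\psi$ therefore never exceeds $1-\psi_A$, the survival probability of the colony that does not disperse, so at $d=2$ dispersion can at most match, and never beat, the no-dispersion model, which is the asserted conclusion.

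The honest assessment of where the difficulty sits is that it is not in this remark at all: once both theorems are available the statement is immediate, and the real analytic work is entirely contained in the proofs of Theorems~\ref{th:semdisp} and~\ref{MCOd}. The hard part, in the interpretive sense the authors themselves flag, is that the argument is purely computational and supplies no pathwise explanation for $\psi_A=\psi_2^o$; one would like a measure-preserving correspondence between the extinction event of the lone colony and that of the tree process rooted at the origin, but no such coupling is visible, which is exactly why the coincidence is recorded as a surprise.
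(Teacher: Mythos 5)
Your proposal is correct and matches the paper's reasoning exactly: the remark is an immediate comparison of the closed forms in Theorems~\ref{th:semdisp} and~\ref{MCOd}, combined with the preceding remark's domination argument $\psi_2^o\leq\psi_2^i\leq\psi_2^u$ (optimal dispersion maximizes the number of new colonies, hence the survival probability, among all schemes). You also correctly identify that the coincidence $\psi_A=\psi_2^o$ is established purely computationally, with no pathwise coupling offered, which is precisely why the authors flag it as a surprise.
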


\begin{theorem}\label{MCOt}
The process $\MCOt$ survives ($\psi_3^o < 1 $) if and only if
\[
p > \frac{\lambda + 1}{2 \lambda^2 + 2 \lambda + 1}.
\]
Moreover
\[
\psi_3^o = \min \left \{1, \frac{\lambda + 1}{2 \lambda} \displaystyle \left [ -1 + \sqrt{\frac{\lambda p +4 - 3p}{(\lambda +1)p}}  \right ] \right \}.
\]
\end{theorem}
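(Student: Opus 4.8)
The plan is to reduce the spatial process $\MCOt$ to a Galton--Watson branching process whose ``individuals'' are colonies. On $\mathbb{T}_3^+$ a colony sitting at a vertex $v$ disperses its survivors only to the three children of $v$, so distinct colonies always use disjoint sets of vertices and never compete for the same site; consequently each colony independently gives birth to a random number $C$ of new colonies (one at each colonized child), and every new colony restarts the same dynamics from a single individual. Thus the genealogy of colonies is exactly a Galton--Watson tree with offspring law $C$, and since each colony has an a.s.\ finite ($\mathrm{Exp}(1)$) lifetime while the colonies of generation $n$ are born only after $n$ successive positive delays (so there is no explosion), the population has colonies at all times with positive probability if and only if this branching process is non-extinct. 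By the standard theory, $\psi_3^o$ is the smallest root in $[0,1]$ of $s=f(s)$, where $f(s)=\bbE[s^C]$, and survival is equivalent to $f'(1)>1$.

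The second step is to determine the offspring law $C=\min\{R,3\}$, where $R$ is the number of individuals that survive the catastrophe in a single colony. First I would compute the colony size $M$ at the catastrophe time: since the colony grows like $1+N(t)$ with $N(t)\sim\mathrm{Poisson}(\lambda t)$ and the catastrophe occurs at an independent $\mathrm{Exp}(1)$ time, integrating the Poisson weights against $e^{-t}\,dt$ gives $\bbP(M=m)=(1-a)a^{m-1}$ with $a=\lambda/(1+\lambda)$, i.e.\ $M$ is geometric. Feeding this into the geometric catastrophe rule $\mu_{m,0}=q^m$, $\mu_{m,r}=pq^{m-r}$ with $q=1-p$ and summing over $m\ge r$ yields the clean form $\bbP(R=r)=\frac{(1-a)p}{1-aq}\,a^{r-1}$ for $r\ge1$ and $\bbP(R=0)=\frac{(1-a)q}{1-aq}$. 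Truncating at $3$ then gives $\bbP(C=0)$, $\bbP(C=1)$, $\bbP(C=2)$ and $\bbP(C=3)=\bbP(R\ge3)$ explicitly, all with denominator $1-aq$.

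With the offspring law in hand, $f$ is the cubic $f(s)=\frac{1}{1-aq}\big[q(1-a)+(1-a)p\,s+(1-a)pa\,s^2+pa^2s^3\big]$. For the survival threshold I would compute $f'(1)=\frac{p(1+a+a^2)}{1-aq}$, impose $f'(1)>1$, and simplify using $q=1-p$ and $a=\lambda/(1+\lambda)$; this collapses to $p(2\lambda^2+2\lambda+1)>\lambda+1$, i.e.\ the stated condition. For the extinction probability I would solve $f(s)=s$: because $f(1)=1$ the value $s=1$ is always a root, so I factor $(s-1)$ out of the cubic, reducing it to $pa^2s^2+pa\,s-q(1-a)=0$; the smallest nonnegative root is $s=\frac{1}{2a}\big[-1+\sqrt{1+4q(1-a)/p}\,\big]$, and substituting $a=\lambda/(1+\lambda)$ turns the radicand into $(\lambda p+4-3p)/((\lambda+1)p)$ and the prefactor into $(\lambda+1)/(2\lambda)$, giving exactly the claimed $\psi_3^o$ (capped at $1$ in the subcritical regime).

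The routine part is the algebra; the steps that need genuine care are, first, the justification that the colony genealogy is a Galton--Watson process with i.i.d.\ offspring and no explosion---this is what legitimizes the fixed-point characterization of the extinction probability---and, second, the computation that the colony size $M$ at catastrophe is geometric, which is the one place a Poisson/exponential mixing integral enters and which is responsible for the eventual appearance of $a=\lambda/(1+\lambda)$. Once the cubic $f$ is obtained, factoring out the known root $s=1$ and selecting the root of the remaining quadratic that lies in $[0,1]$ is the only delicate algebraic point, and checking that this root is $<1$ precisely when $f'(1)>1$ confirms that the two parts of the theorem are consistent.
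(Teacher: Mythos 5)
Your proposal is correct and follows essentially the same route as the paper: reduce $\MCOt$ to a Galton--Watson process of colonies with offspring law $\min\{N,3\}$, form the cubic generating function, factor out the root $s=1$, and solve the remaining quadratic (the paper packages this last step as Lemma \ref{lemaux}$(ii)$). The only difference is that you derive the law of the number of survivors $N$ (geometric colony size mixed with the geometric catastrophe) from scratch, whereas the paper imports it from Machado \emph{et al.}~\cite{MRV2018}; your derived law $\bbP(R=0)=\tfrac{(1-a)q}{1-aq}$, $\bbP(R=r)=\tfrac{(1-a)p}{1-aq}a^{r-1}$ agrees exactly with the paper's $\beta,\alpha c^{r}$ parametrization, and your algebra for the threshold and for $\psi_3^o$ checks out.
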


\begin{theorem}\label{MCId}
The process $\MCId$ survives ($\psi_2^i < 1 $) if and only if
\[
p > \frac{\lambda + 2}{\lambda^2 + 2 \lambda + 2}.
\]
Moreover
\[
\psi_2^i =  \min \left \{1, \frac{(1-p)(\lambda +2)}{\lambda p(\lambda+1)} \right \}.
\]
\end{theorem}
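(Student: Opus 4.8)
The plan is to recognize that the colony-level dynamics of $\MCId$ form a Galton--Watson branching process and then to compute its offspring distribution explicitly. Each colony is born with a single individual, grows at rate $\lambda$, and is struck by a catastrophe at rate $1$; at that moment its survivors scatter onto the two child vertices, each occupied vertex producing a fresh single-individual colony, and all colonies evolve independently with the same law. Hence the population persists with positive probability precisely when this branching process survives, and $\psi_2^i$ is the smallest root in $[0,1]$ of the fixed-point equation $s=g(s)$, where $g$ is the probability generating function of the number $Y$ of colonies a single colony spawns at its catastrophe. Since $d=2$ forces $Y\in\{0,1,2\}$, $g$ is a quadratic and the equation is explicitly solvable; this is the structural feature that makes the case $d=2$ tractable.

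First I would determine the colony size at the instant of its catastrophe. Births (rate $\lambda$) and the catastrophe (rate $1$) compete as independent exponential clocks, so the number of births preceding the catastrophe is geometric and the size $S$ at the catastrophe satisfies $\bbP(S=i)=\left(\frac{\lambda}{\lambda+1}\right)^{i-1}\frac{1}{\lambda+1}$ for $i\ge 1$. Feeding this into the geometric-catastrophe law $\mu_{ij}$ and summing the resulting geometric series, I expect the number $R$ of survivors to again be geometric with an atom at $0$: writing $a=\frac{\lambda}{\lambda+1}$ one should get $\bbP(R=0)=\frac{(1-a)q}{1-aq}$ and $\bbP(R=j)=\frac{(1-a)p}{1-aq}\,a^{\,j-1}$ for $j\ge 1$, a clean form that streamlines the remaining computation.

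Next I would assemble the offspring law. Under independent dispersion with $d=2$, a batch of $j\ge 2$ survivors lands entirely on one vertex with probability $2^{1-j}$ and splits across both otherwise, while $R=0$ and $R=1$ give $Y=0$ and $Y=1$ deterministically. Thus $Y=0$ exactly when $R=0$, giving $\bbP(Y=0)=\frac{1-p}{\lambda p+1}$, while $\bbP(Y=2)=\sum_{j\ge 2}\bbP(R=j)\,(1-2^{1-j})$. The main labor here is this geometric summation; carrying it out and simplifying through $1-a=\frac{1}{\lambda+1}$, $2-a=\frac{\lambda+2}{\lambda+1}$ and $1-aq=\frac{\lambda p+1}{\lambda+1}$ should yield $\bbP(Y=2)=\frac{p\lambda(\lambda+1)}{(\lambda p+1)(\lambda+2)}$, with $\bbP(Y=1)$ recovered as the complement. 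This bookkeeping of the two nested geometric series is the step I expect to be most error-prone and is the true technical heart of the argument.

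Finally I would solve the fixed-point equation. With $g(s)=\bbP(Y=0)+\bbP(Y=1)s+\bbP(Y=2)s^2$, the identity $\bbP(Y=1)-1=-\bbP(Y=0)-\bbP(Y=2)$ lets $s=g(s)$ factor as $(s-1)\big(\bbP(Y=2)\,s-\bbP(Y=0)\big)=0$, so the non-trivial root is $\bbP(Y=0)/\bbP(Y=2)$ and
\[
\psi_2^i=\min\left\{1,\ \frac{\bbP(Y=0)}{\bbP(Y=2)}\right\}=\min\left\{1,\ \frac{(1-p)(\lambda+2)}{\lambda p(\lambda+1)}\right\}.
\]
The process survives iff the mean offspring $\bbE[Y]=1-\bbP(Y=0)+\bbP(Y=2)$ exceeds $1$, that is iff $\bbP(Y=2)>\bbP(Y=0)$; clearing denominators reduces this inequality to $p(\lambda^2+2\lambda+2)>\lambda+2$, which is exactly the stated threshold $p>\frac{\lambda+2}{\lambda^2+2\lambda+2}$.
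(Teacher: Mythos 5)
Your proof is correct and follows essentially the same route as the paper: both treat the colony process as a Galton--Watson process whose offspring generating function is a quadratic, factor out $(s-1)$, and read off the nontrivial root $\mathbb{P}(Y=0)/\mathbb{P}(Y=2)$. The only difference is presentational: the paper imports the survival criterion $\mathbb{E}[(1/2)^N]<1/2$ and the fixed-point equation from Propositions 4.3 and 4.4 of Machado \textit{et al.}~\cite{MRV2018}, whereas you derive the survivor distribution and the offspring law $\mathbb{P}(Y=2\mid R=j)=1-2^{1-j}$ from first principles and use the mean-offspring criterion directly; the resulting computations coincide.
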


\begin{theorem}\label{MCIt}
The process $\MCIt$ survives ($\psi_3^i < 1 $) if and only if

\begin{equation}\label{lambda3i}
p > \frac{\lambda + 3}{2\lambda^2 + 3 \lambda + 3}.
\end{equation}
Moreover
\[
\psi_3^i = \min \left \{1, \frac{1}{2 \lambda} \displaystyle \left [ -(\lambda +3) + \sqrt{\frac{(\lambda +3)(p\lambda^2 +4 \lambda +6-3p)}{p(\lambda +1)}} \right ] \right \}.
\]

\end{theorem}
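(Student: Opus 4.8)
The plan is to reduce $\MCIt$ to a Galton--Watson process counting \emph{colonies} and then to solve the associated fixed--point equation. Since each colony is created with a single individual at a fresh vertex, grows by its own rate--$\lambda$ Poisson clock, and is struck by its own mean--$1$ catastrophe clock, all independently, the survivors of one colony produce a number $N\le 3$ of new colonies, one per colonized child vertex, and these offspring numbers are i.i.d.\ across colonies. Because $N\le 3$ never exceeds the branching of $\mathbb{T}_3^+$, the genealogy of colonies is exactly a homogeneous Galton--Watson tree with offspring variable $N$. Hence $\psi_3^i$ is the smallest root in $[0,1]$ of $s=\phi_N(s)$, where $\phi_N(s)=\mathbb{E}[s^N]$, and the process survives if and only if $\mathbb{E}[N]>1$, equivalently iff that smallest root is strictly below $1$.

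Next I would compute the law of $N$ in three steps. First, the colony size $X(T)$ at its catastrophe: as growth and catastrophe are competing exponentials of rates $\lambda$ and $1$, the number of births before the catastrophe is geometric, so
\[
A(x):=\mathbb{E}\!\left[x^{X(T)}\right]=\frac{x}{\lambda+1-\lambda x}.
\]
Second, conditioning on $X(T)$ and applying the geometric--catastrophe law gives the probability generating function of the number of survivors $S$,
\[
g(z):=\mathbb{E}\!\left[z^{S}\right]=A(q)+\frac{pz}{z-q}\bigl(A(z)-A(q)\bigr),\qquad q=1-p.
\]
Third, independent dispersion throws the $S$ survivors into $3$ boxes uniformly, so, conditionally on $S=r$, $N$ is the number of occupied boxes and $\mathbb{P}(N=y\mid S=r)=\binom{3}{y}T(r,y)3^{-r}$. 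Summing against $g$ and using $T(r,1)=1$, $T(r,2)=2^r-2$, $T(r,3)=3^r-3\cdot2^r+3$ expresses $\phi_N$ as a cubic whose coefficients depend only on $g(0)$, $g(1/3)$ and $g(2/3)$; these become explicit rational functions of $\lambda,p$ once one inserts $A(q)=\tfrac{1-p}{1+\lambda p}$, $A(1/3)=\tfrac{1}{2\lambda+3}$ and $A(2/3)=\tfrac{2}{\lambda+3}$.

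Finally I would solve $s=\phi_N(s)$. Since $\phi_N(1)=1$, the value $s=1$ is always a root, so dividing $\phi_N(s)-s$ by $(s-1)$ leaves a quadratic $c_3 s^2+(c_2+c_3)s-c_0=0$, where $c_0,\dots,c_3$ are the coefficients of $\phi_N$. Substituting the computed coefficients and clearing common factors, this collapses to
\[
\lambda s^2+(\lambda+3)\,s-\frac{(1-p)(2\lambda+3)(\lambda+3)}{2p\lambda(\lambda+1)}=0,
\]
whose smallest nonnegative root, after simplifying the discriminant, is precisely the stated expression for $\psi_3^i$. The survival criterion follows by noting that this upward parabola has its positive root below $1$ iff its value at $s=1$ is positive, i.e.\ iff $(1-p)(\lambda+3)<2p\lambda(\lambda+1)$, which rearranges to $p>\frac{\lambda+3}{2\lambda^2+3\lambda+3}$; in the complementary regime the minimal fixed point is $1$, accounting for the $\min\{1,\cdot\}$.

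I expect the main obstacle to be twofold. Conceptually, the delicate point is justifying the reduction to a \emph{homogeneous} Galton--Watson tree: one must check that distinct colonies reproduce independently and identically and that the per--catastrophe dispersal already encodes all spatial competition, so that no extra collision constraint arises from embedding in $\mathbb{T}_3^+$. Computationally, the bulk of the work is the algebraic simplification of $g(1/3)$ and $g(2/3)$, the factorization of the cubic, and the reduction of the discriminant to $\frac{(\lambda+3)(p\lambda^2+4\lambda+6-3p)}{p(\lambda+1)}$; these steps are routine but require care, and the happy cancellations (for instance, the apparent singularities at $z=q$ and at $3p=1,2$ turning out to be removable) are what make the final closed form possible.
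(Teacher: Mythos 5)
Your proposal is correct and takes essentially the same route as the paper: both reduce $C_3^i(\lambda,p)$ to a Galton--Watson process on colonies, assemble the cubic offspring generating function from the survivor distribution and the surjection counts $T(r,y)$, factor out $(s-1)$, and solve the resulting quadratic (your $\lambda s^2+(\lambda+3)s-\tfrac{(1-p)(2\lambda+3)(\lambda+3)}{2p\lambda(\lambda+1)}=0$ is exactly the paper's, since $2\lambda^2+9\lambda+9=(2\lambda+3)(\lambda+3)$). The only difference is that you re-derive from first principles the ingredients the paper imports from Machado \emph{et al.}~\cite{MRV2018} (the survivor law and Propositions 4.3--4.4), and your survival criterion via the sign of the quadratic at $s=1$ is equivalent to the paper's condition $\mathbb{E}[(2/3)^N]<2/3$.
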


\begin{theorem}\label{MCUd} 
The process $\MCUd$ survives ($\psi_2^u < 1 $) if and only if

\[
\ln(\lambda + 1) < \frac{\lambda[ \lambda^2 p + (4p-1)\lambda + 2p]}{2 (\lambda +1 )^2 p}.
\]
Moreover
\[
\psi_2^u = \min \left \{1, \frac{\lambda^2 (1-p)}{(\lambda +2)(\lambda +1)\lambda p - 2p(\lambda+1)^2 \ln(\lambda +1)} \right \}.
\]	
\end{theorem}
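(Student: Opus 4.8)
The plan is to reduce the continuous-time process $\MCUd$ to an ordinary Galton--Watson branching process at the level of colonies. Because the survivors of a catastrophe disperse only to the two children vertices of $\mathbb{T}_2^+$, and these root disjoint subtrees, distinct colonies evolve independently and each one spawns its offspring colonies without interaction. Counting colonies generation by generation (one generation per catastrophe event) therefore yields a genuine Galton--Watson process; $\MCUd$ survives exactly when this process is supercritical, and $\psi_2^u$ equals its extinction probability. The whole argument then reduces to computing the offspring law $N$ (the number of new colonies produced by a single colony) and extracting the extinction probability from its generating function.

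First I would determine the law of $N$ in three steps. (i) Since births form a rate-$\lambda$ Poisson process run for an independent $\mathrm{Exp}(1)$ time, the colony size $M$ at the catastrophe is geometric on $\{1,2,\dots\}$ with $\bbP(M=m)=\frac{1}{\lambda+1}\bigl(\frac{\lambda}{\lambda+1}\bigr)^{m-1}$. (ii) Composing with the geometric-catastrophe kernel $\mu_{ij}$ and summing over $m$, the number of survivors $R$ satisfies $\bbP(R=0)=\frac{1-p}{1+\lambda p}$ and $\bbP(R=r)=\frac{p}{1+\lambda p}\bigl(\frac{\lambda}{\lambda+1}\bigr)^{r-1}$ for $r\ge 1$. (iii) Feeding $R$ into the uniform-dispersion law with $d=2$, for which $\bbP(Y=1\mid R=r)=\frac{2}{r+1}$ and $\bbP(Y=2\mid R=r)=\frac{r-1}{r+1}$, gives $N=Y\in\{0,1,2\}$.

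Next, since $N$ takes only the values $0,1,2$, its generating function $\phi(s)=P_0+P_1 s+P_2 s^2$ is quadratic and $\phi(1)=1$, so $\phi(s)=s$ has $s=1$ as one root and $P_0/P_2$ as the other. Standard Galton--Watson theory then gives $\psi_2^u=\min\{1,P_0/P_2\}$, with survival ($\psi_2^u<1$) equivalent to $P_0<P_2$, i.e.\ to $\bbE[N]>1$. It remains to evaluate $P_0=\bbP(N=0)=\frac{1-p}{1+\lambda p}$ and $P_2=\frac{p}{1+\lambda p}\sum_{r\ge1}\frac{r-1}{r+1}\bigl(\frac{\lambda}{\lambda+1}\bigr)^{r-1}$.

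The one genuinely nonroutine step --- the source of the logarithm that singles out uniform dispersion --- is summing this last series. Writing $\frac{r-1}{r+1}=1-\frac{2}{r+1}$ reduces it to a geometric series plus $\sum_{r\ge1}\frac{\alpha^{r-1}}{r+1}$ with $\alpha=\frac{\lambda}{\lambda+1}$, and the latter equals $\alpha^{-2}\bigl(-\ln(1-\alpha)-\alpha\bigr)=\frac{(\lambda+1)^2}{\lambda^2}\bigl(\ln(\lambda+1)-\frac{\lambda}{\lambda+1}\bigr)$, using $1-\alpha=\frac{1}{\lambda+1}$. Substituting back gives $P_2=\frac{p}{1+\lambda p}\cdot\frac{(\lambda+1)(\lambda+2)\lambda-2(\lambda+1)^2\ln(\lambda+1)}{\lambda^2}$. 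Forming $P_0/P_2$ yields the stated formula for $\psi_2^u$ once the factor $1+\lambda p$ cancels, and rewriting $P_0<P_2$ by isolating $\ln(\lambda+1)$ produces the stated survival criterion after routine algebra. I expect essentially all the difficulty to concentrate in this summation and in the bookkeeping of the resulting simplification; the branching-process reduction and the extraction of the extinction probability are standard.
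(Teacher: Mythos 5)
Your proposal is correct and follows essentially the same route as the paper: reduce $\MCUd$ to a Galton--Watson process on colonies, compute the offspring law $Y\in\{0,1,2\}$ by composing the geometric survivor distribution with the uniform-dispersion kernel $\bbP(Y=y\mid R=r)=\binom{2}{y}\binom{r-1}{y-1}/(r+1)$, sum the $\sum_r \frac{\alpha^{r-1}}{r+1}$ series to produce the logarithm, and read off $\psi_2^u=\min\{1,P_0/P_2\}$ from the quadratic fixed-point equation (the paper packages this last step as item $v)$ of its Lemma 4.1 and cites Machado \emph{et al.} for the survivor distribution you derive from scratch, but the substance is identical). Your computed $P_0$, $P_2$, and the resulting survival criterion all match the stated formulas.
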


\begin{theorem}\label{MCUt} 
The process $\MCUt$ survives ($\psi_3^u < 1 $) if and only if

\begin{equation}\label{lambda3u}
\frac{3p(\lambda+1)^2}{\lambda^2 (\lambda p +1 )}\displaystyle \left [ \lambda + 2 - \frac{2(\lambda +1)}{\lambda} \ln (\lambda +1) \right] > 1. 
\end{equation}
Moreover
\[
\psi_3^u = \min \left \{1, \frac{1}{2} \displaystyle \left[ \frac{ \sqrt {\Delta} -(k_2+k_3) +(m_2+m_3) \ln(\lambda+1)}{k_3 -m_3 \ln (\lambda+1)}  \right ] \right \},
\]
where
\[ \Delta = (m_2 +m_3)^2 \ln^2 (\lambda + 1) - 2 [(k_2+k_3)(m_2+m_3) + 2\beta m_3] \ln(\lambda+1) + [(k_2+k_3)^2 +4\beta k_3],
\]

\[ \beta= \frac{1-p}{\lambda p +1},\, k_2 = \frac{-3p(\lambda +1)(5 \lambda +6)}{\lambda^2 (\lambda p +1)},\, k_3 = \frac{p(\lambda +1)(\lambda^2 +12 \lambda +12)}{\lambda^2 (\lambda p +1)}, 
\]
\[m_2 = \frac{-6p (\lambda + 1)^2 (\lambda + 3)}{\lambda^3 (\lambda p+1)} \text{ and }  m_3 = \frac{6p(\lambda +1)^2(\lambda + 2)}{\lambda^3 (\lambda p + 1)}.
\]	
\end{theorem}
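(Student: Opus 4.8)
The plan is to reduce the spatial process to a Galton--Watson branching process of colonies and then read off the extinction probability as the smallest fixed point of the offspring generating function. Since colonies evolve independently and a colony, when struck by a catastrophe, is replaced by the (random) number $Y$ of colonies that its survivors manage to found, the genealogy of colonies is a Galton--Watson tree with offspring law $Y$; as in the proofs of the preceding theorems, $\MCUt$ survives if and only if this tree is infinite with positive probability, that is, if and only if $\mathbb{E}[Y]>1$, and $\psi_3^u$ is the smallest root in $[0,1]$ of $G(s)=s$, where $G(s)=\mathbb{E}[s^{Y}]$.

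First I would compute the offspring law $Y$ by composing three steps. Writing $\theta=\lambda/(\lambda+1)$, a colony born with one individual reaches size $N=n$ at its catastrophe time with probability $\theta^{\,n-1}(1-\theta)$, since at each event the colony grows (probability $\theta$) or is hit (probability $1-\theta$). Compounding this geometric law with the geometric catastrophe $\mu_{ij}$ gives the number of survivors $R$: one finds $\mathbb{P}(R=0)=\beta=\tfrac{1-p}{\lambda p+1}$ and $\mathbb{P}(R=r)=\tfrac{p}{\lambda p+1}\,\theta^{\,r-1}$ for $r\ge 1$. Applying the uniform-dispersion conditional law $\mathbb{P}(Y=y\mid R=r)=\binom{r-1}{y-1}\binom{3}{y}/\binom{r+2}{2}$ and summing over $r$ then yields $a_y:=\mathbb{P}(Y=y)$ for $y=0,1,2,3$, with $a_0=\beta$.

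The series in the last step are where the logarithms enter: after partial-fraction reductions such as $\frac{r}{(r+1)(r+2)}=\frac{-1}{r+1}+\frac{2}{r+2}$ (and the analogue $\frac{(r-1)(r-2)}{(r+1)(r+2)}=1+\frac{6}{r+1}-\frac{12}{r+2}$ for $y=3$), each term produces a tail of the form $\sum_{k}\theta^{k}/k=-\ln(1-\theta)=\ln(\lambda+1)$ plus a rational function of $\lambda$. Collecting these shows $a_3=k_3-m_3\ln(\lambda+1)$ and $a_2+a_3=(k_2+k_3)-(m_2+m_3)\ln(\lambda+1)$ with the stated constants; in parallel, $\mathbb{E}[Y]=\sum_{r\ge1}\mathbb{P}(R=r)\,\mathbb{E}[Y\mid R=r]$ with $\mathbb{E}[Y\mid R=r]=3r/(r+2)$ evaluates to $\frac{3p(\lambda+1)^2}{\lambda^2(\lambda p+1)}\big[\lambda+2-\tfrac{2(\lambda+1)}{\lambda}\ln(\lambda+1)\big]$, so the condition $\mathbb{E}[Y]>1$ is exactly \eqref{lambda3u}.

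Finally, because $G(1)=1$ the cubic $G(s)-s$ factors as $(s-1)\big(a_3 s^2+(a_2+a_3)s-a_0\big)$, so the nontrivial fixed points solve $a_3 s^2+(a_2+a_3)s-\beta=0$; since $a_3>0$ and the constant term is negative, exactly one root is positive, namely $\frac{-(a_2+a_3)+\sqrt{(a_2+a_3)^2+4a_3\beta}}{2a_3}$, and substituting the expressions for $a_2+a_3$ and $a_3$ gives precisely the displayed $\psi_3^u$ together with the discriminant $\Delta$ (one checks $(a_2+a_3)^2+4a_3\beta=\Delta$). The main obstacle is purely computational: evaluating the three dispersion series cleanly and matching the resulting rational-plus-logarithmic coefficients to $k_2,k_3,m_2,m_3$, and then confirming that this quadratic root lies in $[0,1)$ exactly when \eqref{lambda3u} holds, so that the truncation $\min\{1,\cdot\}$ is correct.
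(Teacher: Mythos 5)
Your proposal is correct and follows essentially the same route as the paper: reduce to a Galton--Watson process of colonies whose offspring law $Y$ is the geometric survivor distribution compounded with the uniform occupancy law, evaluate the resulting series (which produce the $\ln(\lambda+1)$ terms), read off survival from $\mathbb{E}[Y]>1$, and get $\psi_3^u$ from the quadratic factor of $G(s)-s$ after removing the root $s=1$. The paper merely packages the fixed-point step as items $ii)$ and $iv)$ of Lemma~\ref{lemaux} and cites Machado \textit{et al.} for the survivor distribution, which you instead derive directly; the substance is identical.
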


\section{Dispersion as a survival strategy}

Towards being able to evaluate dispersion as a survival strategy we define

$$\lambda_c(p):=\inf\{\lambda:  \mc \text{ survives} \},$$
$$\LO:=\inf\{\lambda: \MCO \text{ survives} \},$$
$$\LI:=\inf\{\lambda: \MCI \text{ survives} \},$$
$$\LU:=\inf\{\lambda: \MCU \text{ survives} \}.$$

\begin{obs} When $0<\lambda_c(p)<\infty$ for  $0<p<1,$ the graph 
of $\lambda_c(p)$ splits the parametric space $\lambda \times p$ into two regions. For those values of $(\lambda,p)$ above the curve $\lambda_c(p)$  there is survival in $\mc$ with positive probability, and for those values of $(\lambda,p)$  below the curve  $\lambda_c(p)$ extinction occurs in $\mc$ with probability 1. The analogous happens also for $\LO$, $\LI$ and  $\LU$. For $d = 2$ and $d = 3$, from inequality~(\ref{Ext-disp}) it follows that $$\LO\leq\LI\leq\LU.$$ For an illustration, see Figures \ref{fig:sub1} and \ref{fig:sub2}.
\end{obs}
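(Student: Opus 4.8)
The plan is to deduce every assertion in the remark from two facts already in hand: that each extinction probability is a non-increasing function of $\lambda$, and the pointwise ordering~(\ref{Ext-disp}). No use of the explicit formulas in Theorems~\ref{MCOd}--\ref{MCUt} is needed; qualitative comparison suffices.

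First I would prove the two-region decomposition for a single process. Consider $\mc$ and fix $p \in (0,1)$. For the base model the extinction probability $\psi_A = \min\{(1-p)/(\lambda p),\,1\}$ is visibly non-increasing in $\lambda$, while for $\MCO$, $\MCI$, $\MCU$ the same monotonicity is supplied by the coupling observation that $\psi_d^o,\psi_d^i,\psi_d^u$ are non-increasing in $\lambda$. In each case the survival set $\{\lambda : \psi < 1\}$ is therefore an up-set in $\lambda$: if $\lambda$ lies in it and $\lambda' > \lambda$, so does $\lambda'$. Hence this set is an interval whose left endpoint is precisely the critical value $\lambda_c(p)$ (respectively $\LO$, $\LI$, $\LU$), so that survival holds for $\lambda > \lambda_c(p)$ and almost-sure extinction holds for $\lambda < \lambda_c(p)$. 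Under the standing assumption $0 < \lambda_c(p) < \infty$, letting $p$ range over $(0,1)$ sweeps out the curve $\lambda_c(p)$, which splits the strip $\{(\lambda,p): \lambda>0,\ 0<p<1\}$ into the survival region lying above it and the extinction region lying below it; the identical argument yields the analogous picture for the three dispersion models.

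The substantive claim is $\LO \leq \LI \leq \LU$ for $d \in \{2,3\}$, which I would obtain from set inclusions. Fix $p$ and $d \in \{2,3\}$ and set
\[
S^o = \{\lambda : \psi_d^o < 1\},\qquad S^i = \{\lambda : \psi_d^i < 1\},\qquad S^u = \{\lambda : \psi_d^u < 1\}.
\]
The first inequality in~(\ref{Ext-disp}) gives $\psi_d^o \leq \psi_d^i$ for every $(\lambda,p)$, so $\psi_d^i < 1$ forces $\psi_d^o < 1$; that is, $S^i \subseteq S^o$. Passing to infima, the containment $S^i \subseteq S^o$ gives $\LO = \inf S^o \leq \inf S^i = \LI$. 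The second inequality $\psi_d^i \leq \psi_d^u$ likewise yields $S^u \subseteq S^i$ and hence $\LI \leq \LU$, closing the chain.

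I expect no serious obstacle here, since inequality~(\ref{Ext-disp}) is already available (it is argued through the comparison of the distribution functions of the number of colonies created after a catastrophe). The only step meriting care is the monotonicity in $\lambda$ underlying the decomposition: to make the up-set property fully rigorous for the dispersion models one should exhibit a coupling of the processes at rates $\lambda < \lambda'$ in which the faster process dominates the slower one colony by colony, so that survival at the smaller rate entails survival at the larger. I would spell out (or cite) this standard birth-rate coupling; once it is in place, all remaining deductions are immediate.
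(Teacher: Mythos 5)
Your proposal is correct and follows essentially the same route the paper intends: the remark is justified there by the monotonicity of the extinction probabilities in $\lambda$ (asserted via coupling) together with the pointwise ordering in inequality~(\ref{Ext-disp}), and your passage from $\psi_d^o\leq\psi_d^i\leq\psi_d^u$ to the inclusions of survival sets and thence to $\LO\leq\LI\leq\LU$ is exactly the intended deduction. Your added care about exhibiting the birth-rate coupling explicitly is a reasonable refinement but not a departure from the paper's argument.
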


\begin{figure}[ht]
	\begin{tabular}{ccc}
		$\lambda$ & \parbox[c]{9cm}{\includegraphics[trim={1.1cm 1.5cm 1cm 1cm}, clip, width=9cm]{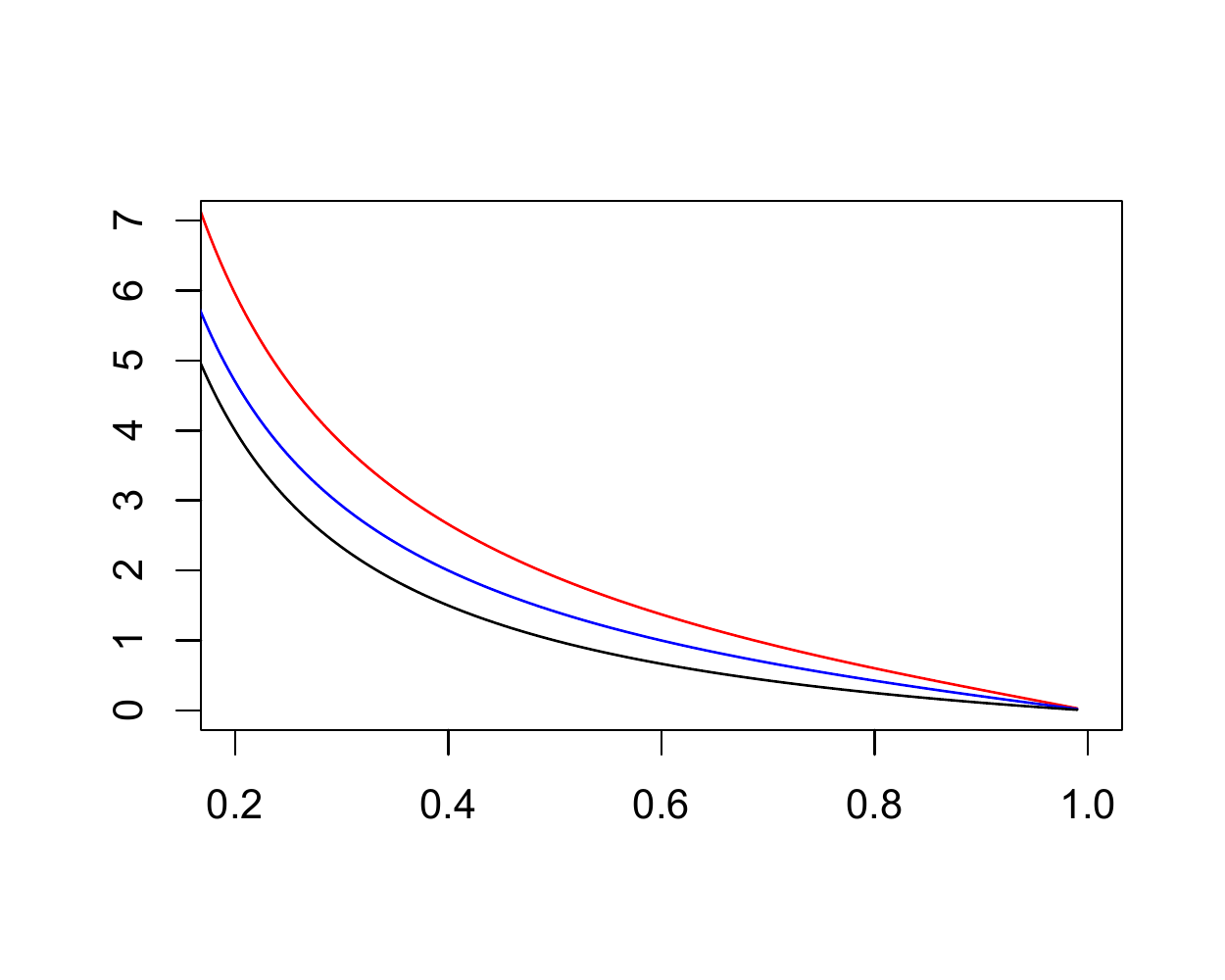}} & 
		\begin{tabular}{l}
			\textbf{\textcolor{red}{------}} $\LUd$\\ 
			\textbf{\textcolor{blue}{------}} $\LId$\\
			\textbf{\textcolor{black}{------}} $\LOd$\\
			\textbf{\textcolor{black}{------}} $\lambda_c(p)$
		\end{tabular} \\
		& $p$
	\end{tabular}
	\caption{Graphics of $\lambda_c(p)$, $\LOd$, $\LId$ and $\LUd$. }
	\label{fig:sub1}
\end{figure}

\begin{figure}[ht]
	\begin{tabular}{ccc}
		$\lambda$ & \parbox[c]{9cm}{\includegraphics[trim={1.1cm 1.5cm 1cm 1cm}, clip, width=9cm]{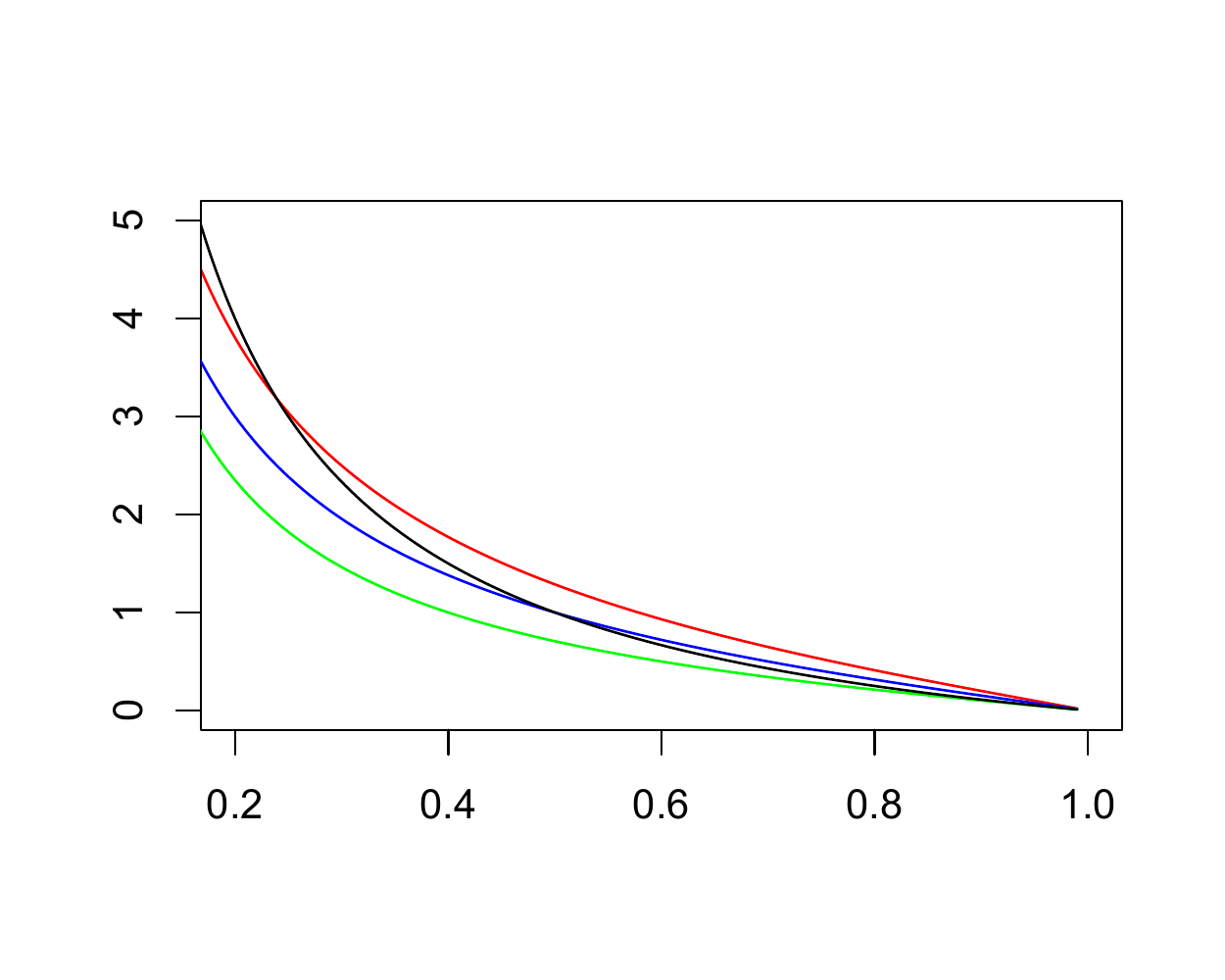}} & 
		\begin{tabular}{l}
			\textbf{\textcolor{red}{------}} $\LUt$\\ 
			\textbf{\textcolor{blue}{------}} $\LIt$\\
			\textbf{\textcolor{green}{------}} $\LOt$\\
			\textbf{\textcolor{black}{------}} $\lambda_c(p)$
		\end{tabular} \\
		& $p$
	\end{tabular}
	\caption{Graphics of $\lambda_c(p)$, $\LOt$, $\LIt$ and $\LUt$. }
	\label{fig:sub2}
\end{figure}

From Remark~\ref{surprising} and the fact that the extinction probabilty, $\psi_d^o$ ($\psi_d^i$ and $\psi_d^u$), is a non-increasing function of $d$, $\lambda$ and $p$, one sees that $\psi_3^o \leq \psi_2^o = \psi_A$. When analysing the critical parameters  one sees (Figure~\ref{fig:sub2}) that $\LOt < \lambda_c(p)$. This shows that when $d=3$, for all $0<p<1$, the optimal dispersion is a superior strategy when compared to the non-dispersion scheme studied in Artalejo \textit{et al.}~\cite{AEL2007}. 
	 
However, dispersion is not always a better scenary for population survival, as one can see in Figure \ref{fig:sub2}. 
Observe that:
\begin{equation}\label{pi3c}
\LIt\leq\lambda_c(p) \iff p\leq p_3^i=\frac{1}{2}
\end{equation}
and
\begin{equation}\label{pu3c}
\LUt\leq\lambda_c(p) \iff p\leq  p_3^u \approx 0.239139,
\end{equation}

\noindent
where the values $p^i_3$ and $p^u_3$ are obtained by plugging $\lambda = \lambda_c(p)=\frac{1-p}{p}$ in equations (\ref{lambda3i}) and (\ref{lambda3u}), respectively, taken as equality.

In the region bounded by the curves $\LIt$ and $\lambda_c(p)$, for $0< p< p_3^i$, we obtain from (\ref{pi3c}) that $\psi_3^i<\psi_A=1$. In this case,  \textit{independent dispersion} is a better strategy than \textit{non-dispersion}. On the other hand, in the region bounded by the curves $\LIt$ and $\lambda_c(p)$ when $p_3^i<p<1$, we obtain that $\psi_A<\psi_3^i=1$ and that \textit{non-dispersion} is a better than \textit{independent dispersion}. These two latter observations were also presented in Junior~\textit{et al}~\cite{JMR2016}. 
 	
Analogously, we can conclude from (\ref{pu3c}) that \textit{uniform dispersion} is a better strategy than \textit{non-dispersion} in the region bounded by $\LUt$ and $\lambda_c(p)$ when $0< p< p_3^u$. The opposite (\textit{non-dispersion} is a better strategy than \textit{uniform dispersion}) holds in the region bounded by $\LUt$ and $\lambda_c(p)$ when $p_3^u<p<1$.
 
An interesting question is to determine whether dispersion is an advantage or not for population survival in the region of parameters where both processes $\mc$ and $\MCIt$ (or $\MCUt$) survives. This question is answered by the following propositions.

\begin{prop}\label{indep3}
	Assume that $\psi_3^i<1$ and  $\psi_A<1$.  Then,
$\psi_3^i<\psi_A$ if and only if   
\begin{equation}\label{eq:indep}
p<\frac{2(\lambda+1)}{3\lambda+5}.
\end{equation}
\end{prop}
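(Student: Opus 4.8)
The plan is to work directly with the explicit formulas supplied by Theorems~\ref{th:semdisp} and~\ref{MCIt}. Since we assume $\psi_A<1$ and $\psi_3^i<1$, both minima are attained at their nontrivial branches, so
\begin{equation*}
\psi_A=\frac{1-p}{\lambda p},\qquad \psi_3^i=\frac{1}{2\lambda}\left[-(\lambda+3)+\sqrt{\frac{(\lambda+3)(p\lambda^2+4\lambda+6-3p)}{p(\lambda+1)}}\,\right].
\end{equation*}
The inequality $\psi_3^i<\psi_A$ is then a concrete algebraic inequality in $\lambda$ and $p$, and the whole proof amounts to reducing it to the stated threshold.

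First I would isolate the square root. Multiplying $\psi_3^i<\psi_A$ by $2\lambda>0$ and adding $(\lambda+3)$ to both sides turns the inequality into
\begin{equation*}
\sqrt{\frac{(\lambda+3)(p\lambda^2+4\lambda+6-3p)}{p(\lambda+1)}}<\frac{2+p(\lambda+1)}{p},
\end{equation*}
each step being an equivalence. The crucial point for the ``iff'' is that the right-hand side is manifestly positive (as $0<p<1$ and $\lambda>0$) while the left-hand side is a nonnegative real number, well defined precisely because $\psi_3^i<1$ forces the radicand to be positive. Hence squaring both sides is a genuine equivalence, not merely an implication.

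Next I would square, clear denominators by multiplying through by $p^2(\lambda+1)>0$, and expand. The expected and decisive phenomenon is that the $\lambda^3$ and $\lambda^2$ terms cancel between the two sides, leaving a difference that is only linear in $\lambda$. Carrying out the bookkeeping yields
\begin{equation*}
(\lambda+1)\bigl(2+p(\lambda+1)\bigr)^2-p(\lambda+3)(p\lambda^2+4\lambda+6-3p)=2(p-1)\bigl[(3p-2)\lambda+5p-2\bigr],
\end{equation*}
so that $\psi_3^i<\psi_A$ is equivalent to $2(p-1)[(3p-2)\lambda+5p-2]>0$. Since $0<p<1$ gives $p-1<0$, this flips to $(3p-2)\lambda+5p-2<0$, i.e. $p(3\lambda+5)<2(\lambda+1)$, which is exactly $p<\frac{2(\lambda+1)}{3\lambda+5}$.

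I expect the only real obstacle to be the clean cancellation in the expansion above: one must verify that both the cubic and quadratic coefficients in $\lambda$ agree on the two sides so that the difference collapses to a linear expression, and then factor that linear expression correctly to expose the sign-determining factor $(p-1)$. Everything else is a chain of elementary equivalences.
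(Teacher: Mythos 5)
Your proposal is correct and follows exactly the route the paper intends: the paper simply states that Proposition~\ref{indep3} is a direct consequence of Theorems~\ref{th:semdisp} and~\ref{MCIt}, and you have carried out the omitted algebra (isolating the square root, noting the right-hand side $\frac{2+p(\lambda+1)}{p}$ is positive so squaring is an equivalence, and verifying that the cubic and quadratic terms cancel to leave $2(p-1)[(3p-2)\lambda+5p-2]$). I checked the expansion and the factorization; they are correct, so your argument is a complete and valid proof.
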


Proposition~\ref{indep3} is a direct consequence of Theorems \ref{th:semdisp} and \ref{MCIt}.From~{(\ref{pi3c})} and Proposition~\ref{indep3} we can conclude that \textit{independent dispersion} is a better strategy compared to \textit{non-dispersion}, when the parameters $(\lambda, p)$ fall in the gray region of Figure~\ref{fig:sub3}. The opposite (\textit{non-dispersion} is a better strategy than \textit{independent dispersion}) holds in the yellow region. 
\begin{figure}[h!]
	\begin{tabular}{ccc}
		$\lambda$ & \parbox[c]{9cm}{\includegraphics[trim={0cm 0cm 0cm 0cm}, clip, width=9cm]{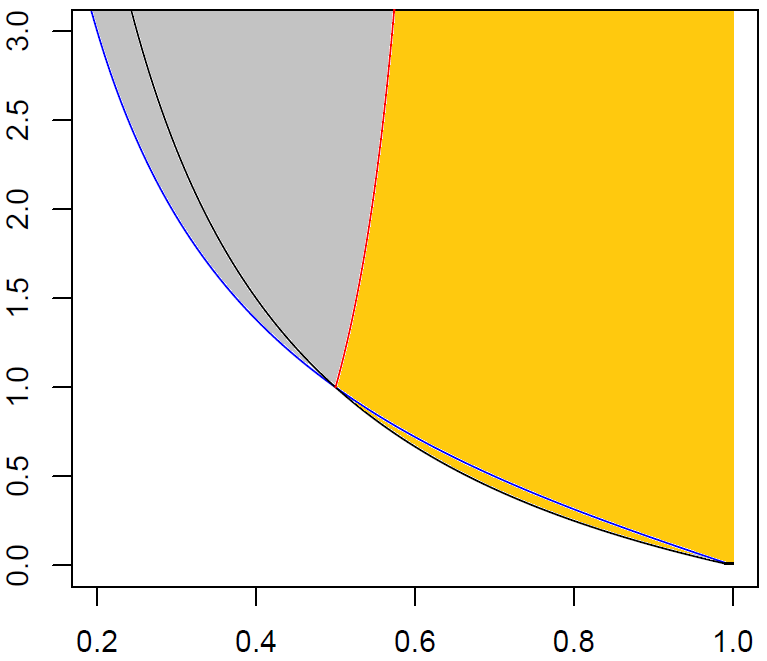}} & 
		\begin{tabular}{l} 
			\textbf{\textcolor{blue}{------}} $\LIt$\\
			\textbf{\textcolor{black}{------}} $\lambda_c(p)$\\
			\textbf{\textcolor{red}{------}} Equality in (\ref{eq:indep})\\ 
		\end{tabular} \\
		& $p$
	\end{tabular}
	\caption{\textit{Independent dispersion} vs \textit{non-dispersion}. In the gray region, $\psi_3^i<\psi_A$. In the yellow region, $\psi_3^i>\psi_A$.}
	\label{fig:sub3}
\end{figure}

\begin{obs}\label{cases} Consider the processes $C(\lambda,p)$ and $C_3^i(\lambda,p)$. From Theorems~\ref{th:semdisp} and \ref{MCIt} and Proposition~\ref{indep3} it follows that, 
	
\begin{itemize}
	\item For $\lambda>1$

\begin{itemize}
	\item If $p\in(0,\frac{\lambda + 3}{2\lambda^2 + 3 \lambda + 3}]$, then $\psi_A=\psi_3^i=1$.
	\item If $p\in(\frac{\lambda + 3}{2\lambda^2 + 3 \lambda + 3},\frac{1}{\lambda+1}]$, then $\psi_3^i<\psi_A=1$.
	\item If $p\in(\frac{1}{\lambda+1},\frac{2(\lambda+1)}{3\lambda+5})$, then $\psi_3^i<\psi_A<1$.
	\item If $p=\frac{2(\lambda+1)}{3\lambda+5}$, then  $\psi_3^i=\psi_A=\frac{\lambda +3}{2\lambda(\lambda+1)}<1$.
	\item If $p\in(\frac{2(\lambda+1)}{3\lambda+5},1)$, then $\psi_A<\psi_3^i<1$.
\end{itemize}

\item For $\lambda=1$
\begin{itemize}
    \item If $p\in(0,\frac{1}{2}]$, then $\psi_A=\psi_3^i=1$.
    \item If $p\in(\frac{1}{2},1]$, then $\psi_A<\psi_3^i<1$.
\end{itemize}

\item For $\lambda<1$
\begin{itemize}
	\item If $p\in(0,\frac{1}{\lambda + 1}]$, then $\psi_A=\psi_3^i=1$.
	\item If $p\in(\frac{1}{\lambda + 1}, \frac{\lambda + 3}{2\lambda^2 + 3 \lambda + 3}]$, then $\psi_A< \psi_3^i=1$.
	\item If $p\in(\frac{\lambda + 3}{2\lambda^2 + 3 \lambda + 3},1)$, then $\psi_3^i<\psi_A<1$.
\end{itemize}
\end{itemize}
\end{obs}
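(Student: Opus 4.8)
The plan is to reduce the entire case analysis to the comparison of three threshold curves in the $(\lambda,p)$-plane and to show that all three comparisons are governed by the single sign of $\lambda-1$. By Theorem~\ref{th:semdisp} we have $\psi_A<1$ exactly when $(1-p)/(\lambda p)<1$, i.e. when $p>p_A:=\tfrac{1}{\lambda+1}$, and on that region $\psi_A=(1-p)/(\lambda p)$. By Theorem~\ref{MCIt}, $\psi_3^i<1$ exactly when $p>p_I:=\frac{\lambda+3}{2\lambda^2+3\lambda+3}$. Finally, Proposition~\ref{indep3} says that whenever both processes survive, $\psi_3^i<\psi_A$ holds precisely when $p<p_*:=\frac{2(\lambda+1)}{3\lambda+5}$. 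Thus $p_A$, $p_I$, $p_*$ are the only breakpoints that can occur, and everything reduces to locating a given $p$ relative to them.

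First I would carry out the three pairwise comparisons of the thresholds. A short computation (clearing denominators and factoring) gives
\begin{align*}
p_A-p_I &= \frac{\lambda(\lambda-1)}{(\lambda+1)(2\lambda^2+3\lambda+3)}, \\
p_*-p_A &= \frac{(2\lambda+3)(\lambda-1)}{(\lambda+1)(3\lambda+5)}, \\
p_*-p_I &= \frac{(\lambda-1)(4\lambda^2+11\lambda+9)}{(3\lambda+5)(2\lambda^2+3\lambda+3)}.
\end{align*}
For $\lambda>0$ every denominator is positive, and the extra numerator factors $\lambda$, $2\lambda+3$ and $4\lambda^2+11\lambda+9$ are positive as well (the last quadratic has negative discriminant $121-144<0$). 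Hence each of the three differences has the same sign as $\lambda-1$. This is the crux of the argument: it yields at once the orderings $p_I<p_A<p_*$ for $\lambda>1$, the coincidence $p_I=p_A=p_*=\tfrac12$ for $\lambda=1$, and $p_*<p_A<p_I$ for $\lambda<1$, and these three regimes are exactly the three blocks of the statement.

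With the orderings in hand, each bullet is read off mechanically: one slices $(0,1)$ at the relevant thresholds, uses Theorem~\ref{th:semdisp} to decide whether $\psi_A=1$, Theorem~\ref{MCIt} to decide whether $\psi_3^i=1$, and Proposition~\ref{indep3} to fix which extinction probability is smaller on the subregion where both are strictly below $1$ --- the direction being determined solely by the sign of $p-p_*$. At the single balance point $p=p_*$ (which lies in the common survival region only when $\lambda\ge 1$) one substitutes $p=p_*$ into $\psi_A=(1-p)/(\lambda p)$ to obtain the common value $\psi_3^i=\psi_A=\frac{\lambda+3}{2\lambda(\lambda+1)}$. The regime $\lambda=1$ collapses to the single threshold $p=\tfrac12$, giving the two listed cases.

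The computations themselves are routine; the main obstacle is purely organisational, namely getting the threshold orderings and the open/closed endpoints exactly right so that each subinterval matches the claimed inequality (for instance, $p\le p_I$ forces $\psi_3^i=1$ while $p>p_A$ forces $\psi_A<1$). The one regime that needs genuine care is $\lambda<1$: there $p_*<p_I$, so the entire survival region $\{p>p_I\}$ lies to the right of $p_*$, and Proposition~\ref{indep3} then dictates the direction of the comparison between $\psi_3^i$ and $\psi_A$ throughout that region.
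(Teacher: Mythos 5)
Your reduction to the three thresholds $p_A=\tfrac{1}{\lambda+1}$, $p_I=\tfrac{\lambda+3}{2\lambda^2+3\lambda+3}$, $p_*=\tfrac{2(\lambda+1)}{3\lambda+5}$ is exactly the intended route (the paper offers no more than ``it follows from'' the cited results), and your three factorizations of the pairwise differences are correct, so the orderings $p_I<p_A<p_*$ for $\lambda>1$, coincidence at $\lambda=1$, and $p_*<p_A<p_I$ for $\lambda<1$ are all right. The problem is that the ``mechanical read-off'' you promise does not reproduce the statement as written in the $\lambda<1$ block. There, since $p_*<p_A<p_I$, the whole common survival region $\{p>p_I\}$ lies to the right of $p_*$, and Proposition~\ref{indep3} then gives the \emph{negation} of $\psi_3^i<\psi_A$, i.e.\ $\psi_A\le\psi_3^i$ (strict, since equality can only occur at $p=p_*\notin(p_I,1)$). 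That is the opposite of the third bullet of the $\lambda<1$ case, which asserts $\psi_3^i<\psi_A<1$. A numerical check confirms your method rather than the stated bullet: at $\lambda=\tfrac12$, $p=\tfrac45$ one has $\psi_A=\tfrac{1-p}{\lambda p}=\tfrac12$ while Theorem~\ref{MCIt} gives $\psi_3^i=\tfrac12\bigl(-7+\sqrt{49+56/3}\,\bigr)/1\approx 0.613>\psi_A$. So either you must explicitly flag that the remark's last bullet has the inequality reversed, or your proof, taken at face value, fails at that bullet; you cannot have both ``each bullet is read off mechanically'' and the statement as printed. You carefully note that $\lambda<1$ ``needs genuine care'' and that Proposition~\ref{indep3} ``dictates the direction,'' but you never say which direction, which papers over precisely the point where your argument and the claim diverge.

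A second, smaller gap: at $p=p_*$ you assert $\psi_3^i=\psi_A$ and compute the common value by substituting into $\psi_A$ only. Proposition~\ref{indep3} as stated gives only $\psi_3^i<\psi_A\iff p<p_*$, which at $p=p_*$ yields $\psi_3^i\ge\psi_A$, not equality. To get equality you need either to substitute $p=p_*$ into the explicit formula of Theorem~\ref{MCIt} as well, or to observe that the proof of Proposition~\ref{indep3} proceeds by an algebraic equivalence in which the strict inequality, equality, and reverse inequality cases correspond exactly to $p<p_*$, $p=p_*$, $p>p_*$. One sentence would fix this, but as written the step is not justified.
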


\begin{prop}\label{unif3}
	Assume that $\psi_3^u<1$ and  $\psi_A<1$.  Then, $\psi_3^u<\psi_A$ if and only if
	\begin{equation}\label{eq:unif}
	   \left[\left(\frac{1+p(\lambda-1)}{\lambda p}\right)m_3+m_2\right]\ln(\lambda+1)<
	\left(\frac{1+p(\lambda-1)}{\lambda p}\right)k_3+k_2-\frac{\lambda p}{\lambda p +1}.
	\end{equation}
\end{prop}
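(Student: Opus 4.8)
The plan is to read off the comparison $\psi_3^u < \psi_A$ as a sign condition on the quadratic whose relevant root is $\psi_3^u$, and then to verify algebraically that this sign condition is exactly \eqref{eq:unif}. Under the hypothesis $\psi_A<1$, Theorem~\ref{th:semdisp} gives $\psi_A = \frac{1-p}{\lambda p}$, which I abbreviate by $s$. The proof of Theorem~\ref{MCUt} identifies $\psi_3^u$ as the extinction probability of the embedded Galton--Watson process of colonies, i.e. as the smallest root in $[0,1]$ of $h(x)=f(x)-x$, where $f$ is the probability generating function of the number of colonies created right after a catastrophe. Dividing out the trivial root at $x=1$, this root is the relevant root of the quadratic
\[
g(x) = \big(k_3 - m_3 \ln(\lambda+1)\big)x^2 + \big[(k_2+k_3) - (m_2+m_3)\ln(\lambda+1)\big]x - \beta,
\]
which is precisely the quadratic that produces the closed form for $\psi_3^u$ in Theorem~\ref{MCUt}; one checks that with $A,B$ the two leading coefficients above, $\Delta = B^2 + 4\beta A$, so the constant term is indeed $-\beta$.

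First I would pin down the shape of $h$ on $(0,1)$. Since $f$ is convex and increasing with $f(1)=1$ and, in the surviving regime, $f'(1)=\mathbb{E}[N]>1$, the function $h$ is nonnegative at $0$, strictly positive on $[0,\psi_3^u)$ and strictly negative on $(\psi_3^u,1)$. Because $\psi_A=s\in(0,1)$, this yields the dichotomy
\[
\psi_3^u < \psi_A \iff h(s)<0 \iff g(s)>0,
\]
the last equivalence using $h(x)=c\,(x-1)g(x)$ with $c>0$ and $x-1<0$ at $x=s$; equivalently, one checks directly that $A=k_3-m_3\ln(\lambda+1)>0$ and $g(0)=-\beta<0$, so that $\psi_3^u$ is the unique positive (larger) root of $g$ and $g(s)>0\iff s>\psi_3^u$.

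Next I would compute $g(s)$. The decisive simplification comes from two identities for $s=\frac{1-p}{\lambda p}$: first $\beta=\frac{1-p}{\lambda p+1}=s\cdot\frac{\lambda p}{\lambda p+1}$, so that $\beta/s=\frac{\lambda p}{\lambda p+1}$; and second $s+1=\frac{1+p(\lambda-1)}{\lambda p}$, which is exactly the factor multiplying $k_3$ and $m_3$ in \eqref{eq:unif}. Dividing $g(s)>0$ through by $s>0$ and regrouping the $k_3$- and $m_3$-terms with the corresponding constant terms gives
\[
(s+1)\big(k_3 - m_3\ln(\lambda+1)\big) + k_2 - m_2\ln(\lambda+1) - \frac{\lambda p}{\lambda p+1} > 0,
\]
which, after writing $s+1=\frac{1+p(\lambda-1)}{\lambda p}$ and separating the $\ln(\lambda+1)$ terms, is verbatim inequality~\eqref{eq:unif}.

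The genuinely delicate step is the second one: justifying $\psi_3^u<\psi_A\iff g(\psi_A)>0$, that is, confirming both that $\psi_3^u$ is the larger root of $g$ and that $A>0$ throughout the surviving region. I expect to handle this by leaning on the branching-process characterization from the proof of Theorem~\ref{MCUt} (the sign of $h$ around its smallest fixed point) rather than on the sign of $A$ directly, since the convexity of $f$ makes the location of $\psi_3^u$ relative to any point of $(0,1)$ equivalent to a single sign evaluation of $h$. The remaining algebra is then routine, the only care needed being the two substitution identities above, which are what cause the unwieldy coefficients $k_2,k_3,m_2,m_3,\beta$ to collapse into the stated form.
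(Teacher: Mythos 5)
Your proof is correct, and it checks out in every detail: the factorization $f(x)-x=(x-1)\bigl(p_3x^2+(p_2+p_3)x-p_0\bigr)$ with $p_3=k_3-m_3\ln(\lambda+1)$, $p_2+p_3=(k_2+k_3)-(m_2+m_3)\ln(\lambda+1)$, $p_0=\beta$, the sign dichotomy $\psi_3^u<\psi_A\iff g(\psi_A)>0$ via convexity of the generating function, and the two substitution identities $\beta/\psi_A=\frac{\lambda p}{\lambda p+1}$ and $\psi_A+1=\frac{1+p(\lambda-1)}{\lambda p}$ all hold and yield exactly inequality~(\ref{eq:unif}). The paper gives no explicit proof (it only asserts the proposition is a consequence of Theorems~\ref{th:semdisp} and~\ref{MCUt}), so your argument supplies the missing derivation; evaluating the quadratic factor of the fixed-point equation at $\psi_A$ is the natural route and is cleaner than manipulating the closed-form expression with the radical from Theorem~\ref{MCUt} directly.
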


Proposition~\ref{unif3} is a consequence of Theorems \ref{th:semdisp} and \ref{MCUt}. From~{(\ref{pu3c})} and Proposition~\ref{unif3} we can conclude that \textit{uniform dispersion} is a better strategy compared to \textit{non-dispersion}, when the parameters $(\lambda, p)$ fall in the gray region of Figure~\ref{fig:sub4}. The opposite (\textit{non-dispersion} is a better strategy than \textit{uniform dispersion}) holds in the yellow region. Analogous to what is presented in Remark~\ref{cases} we have $\lambda \approx 3.18$ splitting the cases.

\begin{figure}[h!]
	\begin{tabular}{ccc}
		$\lambda$ & \parbox[c]{9cm}{\includegraphics[trim={0cm 0cm 0cm 0cm}, clip, width=9cm]{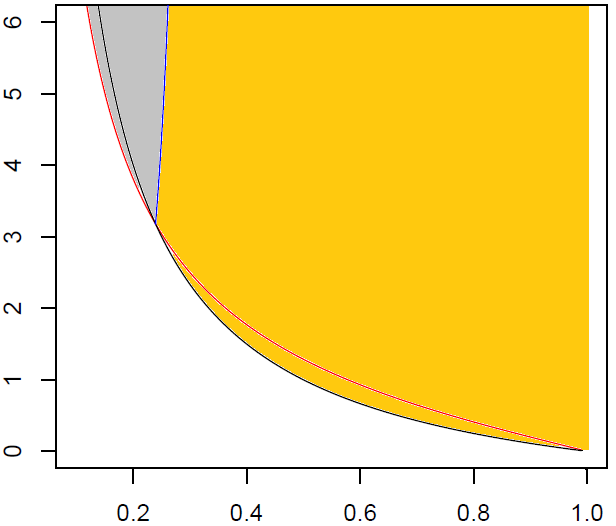}} & 
		\begin{tabular}{l} 
			\textbf{\textcolor{red}{------}} $\LUt$\\
			\textbf{\textcolor{black}{------}} $\lambda_c(p)$\\
			\textbf{\textcolor{blue}{------}} Equality in (\ref{eq:unif})
		\end{tabular} \\
		& $p$
	\end{tabular}
	\caption{\textit{Uniform dispersion} vs \textit{non-dispersion}. In the gray region, $\psi_3^u<\psi_A$. In the yellow region, $\psi_3^u>\psi_A$.}
	\label{fig:sub4}
\end{figure}

\begin{exa} Consider the processes $C(4,p)$ and $C_3^u(4,p)$. From Theorems~\ref{th:semdisp} and \ref{MCUt} and Proposition~\ref{unif3} follows that
	\begin{itemize}
		\item If $p\in\left(0 , \frac{32}{772-375\ln(5)}\right]$, then $\psi_A=\psi_3^u=1$.
		\item If $p\in\left(\frac{32}{772-375\ln(5)},\frac{1}{5}\right]$, then $\psi_3^u<\psi_A=1$.
		\item If $p\in\left(\frac{1}{5},\frac{380-225\ln(5)}{676-375\ln(5)}\right)$, then $\psi_3^u<\psi_A<1$.
		\item If $p=\frac{380-225\ln(5)}{676-375\ln(5)}$, then  $\psi_3^i=\psi_A<1$.
		\item If $p\in\left(\frac{380-225\ln(5)}{676-375\ln(5)},1\right)$, then $\psi_A<\psi_3^u<1$.
	\end{itemize}	
\end{exa}

\begin{obs} Observe that the model with only one colony has a catastrophe rate of $1$ while the multiple colonies model has a catastrophe rate of $n$ if there are $n$ colonies. Moreover, a catastrophe is more likely to wipe out a smaller colony than a larger one. On the other hand multiple colonies give multiple chances for survival and this may be a critical advantage of the multiple colonies model over the single colony model. Therefore our analysis shows that for $d=3$, dispersion is an advantage or not for population survival depending on the dispersion type, $\lambda$ and $p$.
\end{obs}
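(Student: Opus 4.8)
The plan is to compare the two \emph{explicit} extinction probabilities supplied by Theorems~\ref{th:semdisp} and~\ref{MCUt}. Under the standing hypotheses $\psi_A<1$ and $\psi_3^u<1$ we have $\psi_A=\frac{1-p}{\lambda p}$, and, writing $L=\ln(\lambda+1)$, the value $\psi_3^u$ is the expression inside the braces of Theorem~\ref{MCUt}. The first step is to read that expression as a root of a quadratic. Setting
\[
a=k_3-m_3L,\qquad b=(k_2+k_3)-(m_2+m_3)L,\qquad c=-\beta,
\]
a direct check shows $\Delta=b^2-4ac$, so by the quadratic formula $\psi_3^u=\frac{-b+\sqrt{\Delta}}{2a}$ is a root of $g(s):=as^2+bs+c$. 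Thus no appeal to the underlying fixed-point equation is needed: $\psi_3^u$ is characterized purely algebraically from the data of Theorem~\ref{MCUt}.

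The second step is to rewrite the target inequality \eqref{eq:unif} as a statement about the sign of $g$ at $\psi_A$. The key elementary identities are $\frac{1+p(\lambda-1)}{\lambda p}=1+\psi_A$, together with $k_2-m_2L=b-a$ and $\frac{c}{\psi_A}=-\frac{\lambda p}{\lambda p+1}$. Substituting the first two into \eqref{eq:unif} and simplifying, \eqref{eq:unif} becomes $\psi_A\,a+b-\frac{\lambda p}{\lambda p+1}>0$, which by the third identity is exactly $g(\psi_A)/\psi_A>0$; since $\psi_A>0$, this is equivalent to $g(\psi_A)>0$. These are short, routine manipulations.

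The third step is a sign analysis of the parabola. Since $\beta=\frac{1-p}{\lambda p+1}>0$ we have $g(0)=c=-\beta<0$, so once the leading coefficient $a$ is known to be positive, $g$ opens upward with one negative and one positive root, the latter being $\psi_3^u$ (and the hypothesis $\psi_3^u<1$ forces $\Delta>0$, so the two roots are distinct). Consequently $g$ is negative on $(0,\psi_3^u)$ and positive on $(\psi_3^u,\infty)$, whence $g(\psi_A)>0$ if and only if $\psi_A>\psi_3^u$. Combined with the second step, this yields that $\psi_3^u<\psi_A$ holds if and only if \eqref{eq:unif} holds, as claimed.

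The main obstacle is establishing $a=k_3-m_3L>0$. After cancelling the common positive factor $\frac{p(\lambda+1)}{\lambda^2(\lambda p+1)}$ shared by $k_3$ and $m_3$, this reduces to the purely analytic inequality
\[
\lambda^3+12\lambda^2+12\lambda>6(\lambda+1)(\lambda+2)\ln(\lambda+1)\qquad(\lambda>0),
\]
which is surprisingly tight: the two sides nearly touch near $\lambda=1$ and their Taylor coefficients agree through fourth order as $\lambda\to0^+$. I would prove it by a standard one-variable argument, e.g.\ dividing by $6(\lambda+1)(\lambda+2)$ and showing the resulting difference with $\ln(\lambda+1)$ vanishes at $0$ and has positive derivative thereafter, or equivalently by a Taylor/convexity estimate controlling $\ln(\lambda+1)$ from above; the near-equality around $\lambda=1$ is the delicate point that rules out cruder bounds such as $\ln(1+\lambda)<\lambda$. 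Once $a>0$ is secured, the remaining steps are bookkeeping.
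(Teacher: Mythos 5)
Your argument, as written, is a correct and essentially self-contained proof of Proposition~\ref{unif3} (which the paper disposes of in one line as ``a consequence of Theorems~\ref{th:semdisp} and~\ref{MCUt}''): the identification of $\Delta$ with $b^2-4ac$, the identity $\tfrac{1+p(\lambda-1)}{\lambda p}=1+\psi_A$, the rewriting of \eqref{eq:unif} as $g(\psi_A)>0$, and the sign analysis of the upward parabola with $g(0)=-\beta<0$ all check out. One simplification: your ``main obstacle'' $a=k_3-m_3\ln(\lambda+1)>0$ is no obstacle at all, because $a=p_3=\mathbb{P}(Y=3)$, the probability that a catastrophe is followed by three new colonies; this is strictly positive since $\mathbb{P}(N\geq 3)>0$ and the uniform occupancy law puts positive mass on every composition of $n\ge 3$ into three positive parts. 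The tight analytic inequality $\lambda^3+12\lambda^2+12\lambda>6(\lambda+1)(\lambda+2)\ln(\lambda+1)$ is then a corollary of the model, not a prerequisite for the proof, and the delicate one-variable estimate you were planning can be skipped entirely. (Likewise $\Delta>0$ is automatic from $a>0$, $c<0$.)

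The genuine gap is one of scope: the statement you were asked to prove is the concluding Remark, which asserts that for $d=3$ dispersion is an advantage \emph{or not} depending on the \emph{dispersion type}, $\lambda$ and $p$. Your proposal establishes only the uniform-vs-non-dispersion criterion, and even there only as an ``if and only if''; it does not by itself show that both regimes occur, nor does it say anything about the other dispersion types. To reach the Remark you also need (a) the analogous comparison for independent dispersion (Proposition~\ref{indep3}, obtained the same way from Theorem~\ref{MCIt}, with the clean threshold $p=\tfrac{2(\lambda+1)}{3\lambda+5}$), (b) the observation that optimal dispersion always does at least as well, since $\psi_3^o\le\psi_2^o=\psi_A$ and $\LOt<\lambda_c(p)$, and (c) explicit parameter values witnessing each side of the criteria --- e.g.\ the thresholds $p_3^i=\tfrac12$ and $p_3^u\approx 0.239$ from \eqref{pi3c}--\eqref{pu3c}, or the $\lambda=4$ example, which show that the gray and yellow regions of Figures~\ref{fig:sub3} and~\ref{fig:sub4} are both nonempty and that the three types genuinely behave differently. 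Without these ingredients the dependence on ``type'' and the existence of both regimes remain unproven.
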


\section{Proofs}

\begin{lem}\label{lemaux}
	Let $\{Z_n\}_{n \geq 0}$ be a branching process with $Z_0 = 1$, whose offspring distribution has probabilty generating function given by $g(s) = p_0 +p_1s+ p_2s^2 + p_3 s^3$  and with extinction probability denoted by $\psi$. Then\\
	\begin{itemize}
		\item[$i)$] $\psi < 1$ if and only if $p_1+ 2p_2 + 3p_3 > 1.$
		\item[$ii)$] Assume $p_3 \neq 0$. If $p_1+ 2p_2 + 3p_3 > 1$ then
		\[ \psi = \frac{1}{2} \displaystyle \left [ -1 - \frac{p_2}{p_3} + \sqrt{ \left (1 + \frac{p_2}{p_3}\right )^2 + \frac{4p_0}{p_3}}  \right].
		\]
		\item[$iii)$] Assume $p_3=0$. If $p_1 +2p_2 > 1$ then
		\[ \psi = \frac{p_0}{p_2}.
		\]
		\item[$iv)$] Assume $p_0 = \beta$ and $p_y = k_y + m_y \ln \nu, y=1,2,3.$ Then 
		\[ \psi < 1 \hbox{ if and only if } (m_1 +2m_2 +3m_3)\ln \nu < 1-k_1-2k_2-3k_3. 
		\]
		If $(m_1 +2m_2 +3m_3)\ln \nu < 1-k_1-2k_2-3k_3$ and $p_3 \neq 0$ then
		\[ \psi = \frac{1}{2} \displaystyle \left [ -1 + \frac{\sqrt{\Delta} -k_2 -m_2 -\ln \nu}{k_3 + m_3 \ln \nu} \right ] 
		\] 
		where
		\[ \Delta = (m_2+m_3)^2 \ln^2 \nu + 2[( k_2 +k_3)(m_2+m_3) + 2 \beta m_3 ] \ln \nu + [(k_2 +k_3)^2 +4 \beta k_3].
		\]
		\item[$v)$] Take $k_3=m_3 = 0$ in $iv)$. Then
		\[ \psi < 1 \hbox{ if and only if } (m_1+2m_2) \ln \nu < 1-k_1 - 2k_2.
		\]
		Besides, if $(m_1 +2m_2)\ln \nu < 1-k_1-2k_2$ and $p_2 \neq 0$ then
		\[ \psi = \frac{\beta}{k_2 +m_2 \ln \nu}.
		\]
	\end{itemize}
\end{lem}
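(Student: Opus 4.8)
The plan is to reduce the entire lemma to the classical fixed-point description of extinction for a Galton--Watson process and then to carry out the relevant root extractions explicitly, exploiting the fact that $g$ is a polynomial of degree at most three. Recall that for a branching process with offspring generating function $g$, the extinction probability $\psi$ is the smallest solution in $[0,1]$ of $g(s)=s$, and that $\psi<1$ if and only if the mean offspring number $g'(1)$ exceeds $1$. Since $g'(1)=p_1+2p_2+3p_3$, part $(i)$ follows at once from this criterion; this is the single structural input, and every remaining item is an algebraic consequence of it.

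For parts $(ii)$ and $(iii)$ I would first note that $s=1$ is always a root of $g(s)-s$ because $g(1)=p_0+p_1+p_2+p_3=1$, so I can divide the cubic $p_3s^3+p_2s^2+(p_1-1)s+p_0$ by $(s-1)$. A short computation, using $p_0+p_1+p_2+p_3=1$ to collapse the coefficients, leaves the residual factor $p_3s^2+(p_2+p_3)s-p_0$. When $p_3\neq 0$ this quadratic has product of roots $-p_0/p_3\le 0$, so at most one of its roots is positive; since $(i)$ guarantees $\psi<1$ in this regime, that root is precisely $\psi$, and solving by the quadratic formula while keeping the $+\sqrt{\,\cdot\,}$ branch yields the expression in $(ii)$. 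When $p_3=0$ the residual factor degenerates to the linear equation $p_2s-p_0=0$, whose root $p_0/p_2$ is therefore $\psi$, giving $(iii)$.

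Parts $(iv)$ and $(v)$ are then purely a matter of substitution: I would insert $p_0=\beta$ and $p_y=k_y+m_y\ln\nu$ into the criterion of $(i)$ and into the closed forms of $(ii)$ and $(iii)$, and simplify. The survival condition in $(iv)$ is obtained by expanding $p_1+2p_2+3p_3>1$ and collecting the coefficients of $\ln\nu$, which produces the stated linear inequality. For the formula for $\psi$ I would rewrite $\bigl(1+p_2/p_3\bigr)^2+4p_0/p_3$ over the common denominator $p_3^2$: the numerator under the radical becomes exactly the quantity named $\Delta$, and then splitting $-(p_2+p_3)$ as $-(k_2+k_3)-(m_2+m_3)\ln\nu$ and factoring out the denominator $k_3+m_3\ln\nu$ gives the displayed form. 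Part $(v)$ is the specialization $k_3=m_3=0$ of this computation, falling back on $(iii)$ with $p_2=k_2+m_2\ln\nu$.

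I expect the only genuine hazard to be bookkeeping around sign selection: one must consistently retain the root of the residual quadratic that lies in $[0,1)$ (equivalently the $+\sqrt{\,\cdot\,}$ branch) and track the signs of $k_2+k_3$, $m_2+m_3$ and $\beta m_3$ through the expansion of $\Delta$. There is no conceptual obstacle beyond this; the supercriticality hypothesis supplied by $(i)$ is exactly what certifies that the algebraically selected root is the true extinction probability and not the spurious fixed point $s=1$.
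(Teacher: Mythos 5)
Your proposal is correct and follows essentially the same route as the paper: part $(i)$ via the mean-offspring criterion $g'(1)=p_1+2p_2+3p_3$, parts $(ii)$ and $(iii)$ by factoring $(s-1)$ out of $g(s)-s$ to obtain the residual quadratic $p_3s^2+(p_2+p_3)s-p_0$ (resp.\ the linear factor $p_2s-p_0$) and taking its unique non-negative root, and parts $(iv)$ and $(v)$ by direct substitution of $p_0=\beta$, $p_y=k_y+m_y\ln\nu$ into the preceding formulas. Your remark on the sign of the product of roots, which certifies that the $+\sqrt{\,\cdot\,}$ branch is the extinction probability, is a small amount of justification the paper leaves implicit.
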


\begin{proof}[Proof of Lemma \ref{lemaux}]\text{}\\
	
\begin{itemize}
\item[$i)$] $\psi < 1$ if and only if $ \mu = g'(1) > 1$ where $g'(1) = p_1 + 2 p_2 + 3 p_3.$
\item[$ii)$] $\psi$ is the smallest non negative solution of $g(s) = s$. That is, one must solve the equation
\[	(s-1)(p_3s^2 +(p_2+p_3)s -p_0) = 0.\]
\item[$iii)$] $\psi$ is the smallest non negative solution of $g(s) = s$. Then when $p_3 = 0$, one has to solve the equation 
\[ (s-1)(p_2s -p_0) = 0.
\] 
\item[$iv)$] The first part follows immediately from $i)$. For the second part, observe that in $ii)$
\[ \psi = \frac{1}{2} \displaystyle \left [ -1 + \frac{\sqrt{(p_2+p_3)^2 + 4p_0p_3} - p_2}{p_3} \right].
\]
\item[$v)$] The first part follows from $i)$ and the second part follows from $iii)$.
\end{itemize}
\end{proof}

\begin{obs}
The probability distribution of the number of survivals right after the catastrophe (but before the dispersion) is given by
\[\mathbb{P}(N=0) =\beta, \,\mathbb{P}(N = n) = \alpha c^n, n =1,2,\ldots,\]  where
\[ \beta = \frac{1-p}{\lambda p + 1}, \ \alpha = \frac{(\lambda + 1)p}{\lambda ( \lambda p +1)} \hbox{ and } \ c = \frac{\lambda}{\lambda + 1} .
\]
Observe that
\begin{equation}\label{eqBeta}
\beta = \frac{1-(1+ \alpha) c}{(1-c)}.
\end{equation}

For details see Machado~\textit{et al}~\cite[section 2.2]{MRV2018}.
\end{obs}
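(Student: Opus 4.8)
The plan is to obtain the law of $N$ in two stages: first determine the distribution of the colony size $M$ at the instant the catastrophe strikes, and then apply the geometric reduction law $\mu_{ij}$, averaging over $M$. The guiding observation is that, between consecutive catastrophes, the colony evolves as $1+N_\lambda(t)$, where $N_\lambda$ is a Poisson process of rate $\lambda$, while the waiting time $T$ until the catastrophe is an independent $\mathrm{Exp}(1)$ random variable. Thus $M=1+N_\lambda(T)$ is a Poisson count evaluated at an independent exponential time, and conditioning on $M$ reduces the whole computation to the elementary catastrophe mechanism encoded in $\mu_{ij}$.

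First I would compute $\bbP(M=m)$ for $m\ge 1$ by the mixing integral
\[ \bbP(M=m)=\int_0^\infty e^{-t}\,\bbP\big(N_\lambda(t)=m-1\big)\,dt=\int_0^\infty e^{-t}\,\frac{(\lambda t)^{m-1}}{(m-1)!}\,e^{-\lambda t}\,dt. \]
This is a standard Gamma integral and collapses to a geometric law
\[ \bbP(M=m)=(1-c)\,c^{\,m-1},\qquad m\ge 1,\qquad c=\frac{\lambda}{\lambda+1}, \]
so that the pre-catastrophe size is geometric with exactly the parameter $c$ appearing in the statement.

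Next I would condition on $M$ and use $\mu_{mn}$. For total extinction of the colony,
\[ \bbP(N=0)=\sum_{m\ge 1}\bbP(M=m)\,q^{m}=(1-c)q\sum_{m\ge 1}(cq)^{m-1}=\frac{(1-c)q}{1-cq}, \]
while for $n\ge 1$,
\[ \bbP(N=n)=\sum_{m\ge n}\bbP(M=m)\,p\,q^{m-n}=\frac{(1-c)p}{1-cq}\,c^{\,n-1}. \]
Both are geometric series; the only remaining work is to simplify the common denominator using $1-cq=(\lambda p+1)/(\lambda+1)$, which turns the first expression into $\beta=(1-p)/(\lambda p+1)$ and the second into $\alpha c^{\,n}$ with $\alpha=(\lambda+1)p/\big(\lambda(\lambda p+1)\big)$, precisely the values claimed.

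Finally, identity~\eqref{eqBeta} is nothing more than the normalization of this distribution: since $\beta+\sum_{n\ge 1}\alpha c^{\,n}=\beta+\alpha c/(1-c)=1$, solving for $\beta$ yields $\beta=\big(1-(1+\alpha)c\big)/(1-c)$. I do not expect any genuine obstacle here, as the argument is conceptually routine; the only place demanding care is the bookkeeping in the third stage, namely keeping the summation range $m\ge n$ correct and matching the power of $c$ so that the survivor probabilities come out proportional to $c^{\,n}$ rather than $c^{\,n-1}$.
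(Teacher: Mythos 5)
Your derivation is correct: the pre-catastrophe colony size $M=1+N_\lambda(T)$ is geometric with parameter $c=\lambda/(\lambda+1)$ by the Gamma integral, and conditioning on $M$ through the kernel $\mu_{mn}$ gives exactly $\mathbb{P}(N=0)=(1-c)q/(1-cq)=\beta$ and $\mathbb{P}(N=n)=p(1-c)c^{n-1}/(1-cq)=\alpha c^{n}$ after using $1-cq=(\lambda p+1)/(\lambda+1)$; the identity \eqref{eqBeta} is indeed just normalization. The paper itself supplies no proof of this remark, deferring entirely to Machado \emph{et al.}~\cite[Section 2.2]{MRV2018}, where the same two-stage computation (exponential mixing of the Poisson growth, then the geometric reduction law) is carried out, so your argument is a correct, self-contained reconstruction of the cited derivation rather than a genuinely different route.
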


\begin{proof}[Proof of Theorem \ref{MCOd}]

Observe that for $d=2$ we have
\[
p_0 = \mathbb{P}(N=0) = \beta, \ p_1 = \mathbb{P}(N=1)= \alpha c \hbox{ and } p_2 = 1- \beta - \alpha c.
\]
Next, apply Lemma~\ref{lemaux}, part $iii)$.
\end{proof}

\begin{proof}[Proof of Theorem \ref{MCOt}]

Here $d=3$, then we have
\[
p_0 = \mathbb{P}(N=0) = \beta, \ p_1 = \mathbb{P}(N=1)= \alpha c, \ p_2 =  \mathbb{P}(N=2)= \alpha c^2 \hbox{ and } p_3 = 1- \beta - \alpha c - \alpha c^2.
\]
Next, applying Lemma~\ref{lemaux}, part $ii)$

$\begin{array}{rl}
\psi =& \displaystyle\frac{1}{2} \displaystyle \left [ - \left ( 1 + \frac{\alpha c^2}{1- \beta - \alpha c - \alpha c^2}  \right ) + \sqrt {\left ( 1 + \frac{\alpha c^2}{1- \beta - \alpha c - \alpha c^2}  \right )^2 + \frac{4\beta}{1- \beta - \alpha c - \alpha c^2}} \right ]\\

 =& \displaystyle\frac{1}{2(1- \beta - \alpha c - \alpha c^2)} \displaystyle \left [ - (1-\beta-\alpha c) + \sqrt{(1- \beta - \alpha c)^2 + 4 \beta(1- \beta - \alpha c - \alpha c^2)} \right ].
\end{array}$

\noindent
The result follows when one considers
\[ \beta = \frac{1-p}{\lambda p + 1}, \ \alpha = \frac{(\lambda + 1)p}{\lambda ( \lambda p +1)} \hbox{ and } c = \frac{\lambda}{\lambda + 1}.
\]
\end{proof}

\begin{proof}[Proof of Theorem \ref{MCId}]
The first part can be seen as an application of Proposition 4.3 from Machado~\textit{et al}~\cite{MRV2018}. As $d=2$,
	\[ \psi^i_2 < 1 \hbox{ if and only if } \mathbb{E} \left [ \left ( \frac{1}{2}  \right )^N \right ] < \frac{1}{2}.
	\]
Observe that 
\begin{equation}\label{fgpN}
\mathbb{E} \left(s^N \right) = \beta + \sum_{n=1}^{\infty}s^n \alpha c^n = \frac{sc(\alpha-\beta) + \beta}{1-sc}.
\end{equation}
	Then, considering Proposition 4.3 from Machado~\textit{et al}~\cite{MRV2018} we see that $\psi^i_2 < 1$ if and only if
	\[ \frac{\frac{1}{2}c(\alpha-\beta) + \beta}{1-\frac{1}{2}c} < \frac{1}{2},
	\]

	then, substituting $\alpha, \beta$ and $c$ accordingly we see that $\psi^i_2 < 1$ if and only if
	\[
	p > \frac{\lambda + 2}{\lambda^2 + 2 \lambda + 2}.
	\]
In order to obtain the extinction probability we consider Proposition 4.4 from Machado~\textit{et al}~\cite{MRV2018}. There we have that when $d=2$, $\psi^i_2$ is the smallest non-negative solution of
	\[ \sum_{y=0}^{2} \left [ s^y \dbinom{2}{y}\sum_{n=y}^{\infty} \frac{T(n,y)}{2^n} \mathbb{P}(N=n) \right ] = s
	\]
	where
	\[ T(n,y) = \sum_{i=0}^{y} \left [ (-1)^y \dbinom{y}{i}(y-i)^n \right ].
	\]
	Then, to obtain $\psi^i_2$ we have to solve the equation
	\[ \beta + 2s \alpha \sum_{n=1}^{\infty} \left (\frac{1}{2} \right )^n \alpha c^n  + s^2 \sum_{n=2}^{\infty} \left ( \frac{2^n-2}{2^n} \right )\alpha c^n = s
	\]
	or equivalently
	\[ \beta + \left (\frac{2 \alpha c + c - 2}{2-c} \right)s + \frac{\alpha c^2}{(1-c)(2-c)}s^2 = 0.
	\]
	Then, substituting $\alpha, \beta$ and $c$ accordingly, we obtain
	\[ \lambda p (\lambda +1)s^2 +(2p-\lambda^2 p- \lambda -2)s +(1-p)(\lambda +2) = 0
	\]
	or equivalently
	\[
	(s-1)[\lambda p(\lambda+1)s + (\lambda +2)(p-1)] = 0.
	\]
	Finally, we have that
	\[
	\psi_2^i = \min\left\{ 1, \frac{(1-p)(\lambda +2)}{\lambda p(\lambda+1)}\right\}.
	\]
\end{proof}

\begin{proof}[Proof of Theorem \ref{MCIt}]
	
	Proposition 4.3 from Machado~\textit{et al}~\cite{MRV2018} ($d=3$) is to be considered in order to prove the first part. First of all observe that
		\[ \psi^i_3 < 1 \hbox{ if and only if } \mathbb{E} \left [ \left ( \frac{2}{3}  \right )^N \right ] < \frac{2}{3}.
		\]

Using equation~(\ref{fgpN}) and applying Proposition 4.3 from Machado~\textit{et al}~\cite{MRV2018} we see that $\psi^i_3 < 1$ if and only if
		\[ \frac{c}{2} \left (\alpha-\beta \right) +\beta < \frac{1}{2} \left(1- \frac{c}{2} \right).
		\]
		
Then, substituting $\alpha, \beta$ and $c$ accordingly, we see that $\psi^i_3 < 1$ if and only if
		\[
		p > \frac{\lambda + 3}{2\lambda^2 + 3 \lambda + 3}.
		\]

In order to obtain the extinction probability we consider Proposition 4.4 from Machado~\textit{et al}~\cite{MRV2018}. There we have that when $d=3$, $\psi^i_3$ is the smallest non-negative solution of

		\[ \beta + 3\alpha s \left [ \sum_{n=1}^{\infty} \left (\frac{c}{3}\right )^n  \right ]
		+ 3\alpha s^2\left[ \sum_{n=2}^{\infty}\frac{(2^n -2)}{3^n} c^n \right] + \alpha s^3\left [ \sum_{n=3}^{\infty}\frac{(3^n-3\cdot2^n+3)}{3^n}c^n \right ] = s.
		\]
		So, we have to solve
		\[ 2 \alpha c^3 s^3 + 6 \alpha c^2(1-c)s^2 + (3 \alpha c +c-3)(1-c)(3-2c)s +  \beta (1-c)(3-2c)(3-c) = 0,
		\]
		or equivalently (see equation~(\ref{eqBeta})), solve
		\[ (s-1)[(2 \alpha c^3)s^2 + (6 \alpha c^2 -4 \alpha c^3)s + 2 \alpha c^3 -9 \alpha c^2 +9 \alpha c + 2c^3 -11 c^2 +18 c-9] = 0.
		\]
		Substituting $\alpha$ and $c$, equivalently we have to solve
		\[ (s-1)\left[s^2 + \frac{(\lambda + 3)}{\lambda} s + \frac{(p-1)(2 \lambda^2 + 9 \lambda +9)}{2(\lambda +1)p \lambda^2}\right] = 0
		\]
		obtaining
		\[
		\psi^i_3 = \min\left\{1, \frac{1}{2 \lambda} \displaystyle \left [ -(\lambda +3) + \sqrt{\frac{(\lambda +3)(p\lambda^2 +4 \lambda +6-3p)}{p(\lambda +1)}} \right]\right\}.
		\]\end{proof}

\begin{proof}[Proof of Theorem \ref{MCUd}]
	Observe that the process $\MCUd$ behaves a branching process. Next we show that the distribution of $Y$, the number of new colonies right after a catastrophe, satisfies item $v)$ of Lemma \ref{lemaux}.
	First we see that
	\[ \mathbb{P}(Y = 0 ) = \mathbb{P}(N = 0 ) =  \frac{1-p}{\lambda p +1}.
	\]
	For $n \neq 0$,
	\[ \mathbb{P}(Y = y | N=n) = \frac{\dbinom{2}{y} \dbinom{n-1}{y-1}}{{n+1}}, y = 1,2;\ y \leq n.
	\]

	Then,	
	\[  \mathbb{P}(Y = 1 ) = \sum_{n=1}^{\infty} \mathbb{P}(N=n)\mathbb{P}(Y = 1| N=n)=-2 \alpha - \frac{2 \alpha}{c} \ln (1-c)
	\]
	and
	\[  \mathbb{P}(Y = 2 ) = \sum_{n=2}^{\infty} \mathbb{P}(N=n)\mathbb{P}(Y = 2| N=n)= \frac{\alpha (2-c)}{1-c} \frac{2 \alpha }{c} \ln(1-c).
	\]
	
	Substituting $\alpha$ and $c$ we see that
	\[\mathbb{P}(Y=1) = \frac{-2p(\lambda +1)}{\lambda( \lambda p +1)}  - \frac{2p(\lambda +1)^2}{\lambda^2(\lambda p +1)}\ln \displaystyle \left (\frac{1}{\lambda +1} \right)
	\]
	and
	\[ \mathbb{P}(Y=2) = \frac{p(\lambda +1)(\lambda +2)}{\lambda (\lambda p+1)} + \frac{2(\lambda +1)^2 p}{\lambda^2 (\lambda p+1)} \ln  \left ( \frac{1}{\lambda +1} \right). 
	\] 
	The result follows after identifying the parameters $k_y, m_y$ and $\nu$ in item $v)$ of Lemma \ref{lemaux}.
	
\end{proof}

\begin{proof}[Proof of Theorem \ref{MCUt}] Observe that the process $\MCUt$ behaves a branching process. Next we show that the distribution of $Y$, the number of new colonies right after a catastrophe, satisfies item $iv)$ of Lemma \ref{lemaux}. First we see that
	\[ \mathbb{P}(Y = 0 ) = \mathbb{P}(N = 0 ) =  \frac{1-p}{\lambda p +1}.
	\]
	For $n \neq 0$:
\[ \mathbb{P}(Y = y | N=n) = \frac{\dbinom{3}{y} \dbinom{n-1}{y-1}}{\dbinom{n+2}{2}}, y = 1,2,3, y \leq n.
\]

Then,
\[  \mathbb{P}(Y = 1 ) = \sum_{n=1}^{\infty} \mathbb{P}(N=n)\mathbb{P}(Y = 1| N=n)= \frac{3 \alpha (2-c)}{c} + \frac{6 \alpha (1-c)}{c^2} \ln(1-c),
\]
\[  \mathbb{P}(Y = 2 ) = \sum_{n=2}^{\infty} \mathbb{P}(N=n)\mathbb{P}(Y = 2| N=n)= \frac{3 \alpha (c-6)}{c} + \frac{12 \alpha (2c-3)}{c^2} \ln(1-c)
\]
and
\[  \mathbb{P}(Y = 3 ) = \sum_{n=3}^{\infty} \mathbb{P}(N=n)\mathbb{P}(Y = 3| N=n)= \frac{ \alpha (c^2 -12c+12)}{c(1-c)} + \frac{6 \alpha (2-c)}{c^2} \ln(1-c).
\]

Substituting $\alpha$ and $c$ we see that
\[  \mathbb{P}(Y = 1) =  \frac{3p(\lambda +1)(\lambda +2)}{\lambda^2 (\lambda p +1)} + \frac{6p(\lambda+1)^2}{\lambda^3 (\lambda p +1)} \ln \left ( \frac{1}{\lambda + 1} \right ),
\]
\[  \mathbb{P}(Y = 2) =  \frac{-3p(\lambda +1)(5 \lambda +6)}{\lambda^2 (\lambda p +1)} - \frac{6p(\lambda+1)^2(\lambda +3)}{\lambda^3 (\lambda p +1)} \ln \left ( \frac{1}{\lambda + 1} \right )
\]
and
\[  \mathbb{P}(Y = 3) =  \frac{p(\lambda +1)(\lambda^2 + 12 \lambda +12)}{\lambda^2 (\lambda p +1)} + \frac{6p(\lambda+1)^2 (\lambda +2)}{\lambda^3 (\lambda p +1)} \ln \left ( \frac{1}{\lambda + 1} \right ).
\]
The result follows after identifying the parameters $k_y, m_y$ and $\nu$ in item $iv)$ of Lemma \ref{lemaux}.

\end{proof}


\end{document}